\documentclass{amsart}

\usepackage{graphicx,xcolor}

\numberwithin{equation}{section}
\usepackage{cite}
\usepackage{amsmath,amssymb,mathtools}
\usepackage{aliascnt}
\usepackage[hypertexnames=false]{hyperref}
\usepackage{enumitem}
\usepackage{placeins}
\usepackage[initials,nobysame]{amsrefs}

\hypersetup{
    colorlinks=true,
    allcolors=red!70!black,
}
\setlist[enumerate,1]{label=\upshape{(\roman*)},ref=\roman*}
\setlist[enumerate,2]{label=\upshape{(\alph*)},ref=\alph*}

\newtheorem{theorem}{Theorem}[section]
\newaliascnt{proposition}{theorem}
\newtheorem{proposition}[proposition]{Proposition}
\aliascntresetthe{proposition}
\newaliascnt{corollary}{theorem}
\newtheorem{corollary}[corollary]{Corollary}
\aliascntresetthe{corollary}
\newaliascnt{lemma}{theorem}
\newtheorem{lemma}[lemma]{Lemma}
\aliascntresetthe{lemma}
\theoremstyle{definition}
\newaliascnt{definition}{theorem}
\newtheorem{definition}[definition]{Definition}
\aliascntresetthe{definition}
\newtheorem*{acknowledgements}{Acknowledgements}
\newtheorem*{AI}{AI Usage Disclosure}
\theoremstyle{remark}
\newaliascnt{remark}{theorem}

\aliascntresetthe{remark}

\usepackage[capitalize]{cleveref}
\usepackage{autonum}
\crefname{theorem}{theorem}{theorems}
\Crefname{theorem}{Theorem}{Theorems}
\crefname{proposition}{proposition}{propositions}
\Crefname{proposition}{Proposition}{Propositions}
\crefname{corollary}{corollary}{corollaries}
\Crefname{corollary}{Corollary}{Corollaries}
\crefname{lemma}{lemma}{lemmas}
\Crefname{lemma}{Lemma}{Lemmas}
\crefname{definition}{definition}{definitions}
\Crefname{definition}{Definition}{Definitions}
\crefname{remark}{remark}{remarks}
\Crefname{remark}{Remark}{Remarks}

\newcommand{\R}{\mathbf{R}}
\newcommand{\Z}{\mathbf{Z}}

\newcommand{\T}{\mathbf{T}}

\newcommand{\Sph}{\mathbf{S}^2}
\newcommand{\E}{\mathcal{E}}
\newcommand{\B}{\mathcal{B}}
\renewcommand{\L}{\mathcal{L}}

\newcommand{\Rop}{\mathcal{R}}
\newcommand{\CK}{\mathcal{C}(K)}

\DeclareMathOperator{\TC}{TC}
\DeclareMathOperator{\bri}{bri}
\DeclareMathOperator{\bra}{bra}

\title[Milnor's inequality and circular elastic knots]{Milnor's inequality and circular elastic knots}
\author[T.~Miura]{Tatsuya Miura}
\address[T.~Miura]{Department of Mathematics, Graduate School of Science, Kyoto University, Kitashirakawa Oiwake-cho, Sakyo-ku, Kyoto 606-8502, Japan}
\email{tatsuya.miura@math.kyoto-u.ac.jp}
\date{September 23, 2026}
\keywords{Total curvature, crookedness, bridge index, elastic knot}
\subjclass[2020]{53A04 (primary), 57K10, 49Q10 (secondary)}

\begin{document}
\begin{abstract}
We establish the extension of Milnor's inequality, relating total curvature with the bridge index, to the $C^1$-closure of a knot class, without any restriction on the self-intersections of the limit curve. Together with recent work of Reiter--von der Mosel, this resolves the circular elastic knot conjecture, first predicted by Gallotti--Pierre-Louis in 2007 and then formulated as a mathematical conjecture by Gerlach--Reiter--von der Mosel in 2017. More precisely, if the bridge and braid indices of a tame knot class coincide, then the multiply covered circle is the unique elastic knot.
\end{abstract}

\maketitle

\section{Introduction}\label{sec:introduction}

The variational theory of elastic curves goes back to D.~Bernoulli and Euler. In its classical form, an elastica is a critical point of the bending energy among immersed curves of fixed length. The theory of closed elasticae in Euclidean space was developed systematically by Langer and Singer in the 1980s; in particular, they proved that the only stable closed elastica in $\R^3$ is the once covered circle \cite{LS85}. 

Physical closed elastic wires, however, can exhibit many stable configurations other than the once covered circle. To describe such shapes, one must take into account global constraints such as self-avoidance and knot type, leading naturally to the theory of elastic knots. We refer to \cite{MR5054189} for a brief survey of classical elastica theory and related questions involving self-intersections.

Here we consider the mathematical model of elastic knots introduced by Gerlach--Reiter--von der Mosel \cite{GRM}, in which the bending energy is penalized by a small multiple of the ropelength.
Let $\gamma\in H^2(\T;\R^3)$ be a regular closed curve of $H^2$-Sobolev class, where $\T:=\R/\Z$.
The bending energy is defined by
\begin{equation}\label{eq:intro-bending}
    \B[\gamma]:=\int_\gamma|\kappa|^2\,ds,
\end{equation}
where $s$ denotes arclength and $\kappa:=\partial_s^2\gamma$ the curvature vector.
For the self-avoidance term, let $\Rop[\gamma]$ denote the ropelength of $\gamma$, see \eqref{eq:intro-ropelength} below.
For $\vartheta>0$, we define
\begin{equation}\label{eq:intro-penalized}
    \E_\vartheta[\gamma]:=\B[\gamma]+\vartheta\Rop[\gamma].
\end{equation}

Throughout the paper, $K$ denotes a tame knot class. 
Following \cite{GRM} (but without fixing a basepoint), we use the normalized class
\begin{equation}\label{eq:intro-class}
    \CK:=\left\{\gamma\in H^2(\T;\R^3):
    \text{$\gamma$ is an embedding representing $K$, with $|\gamma'|=1$} \right\}.
\end{equation}
In particular, every curve in $\CK$ has length one. 

Gerlach--Reiter--von der Mosel proved that $\E_\vartheta$ admits a global minimizer in $\CK$ for every $\vartheta>0$, and that any sequence of such minimizers $\gamma_{\vartheta_j}$ with $\vartheta_j\to0$ admits a $C^1$-convergent subsequence after translations \cite[Theorems~2.1 and~2.2]{GRM}.
These facts motivate the following definition of (energy minimal) elastic knots.

\begin{definition}[Elastic knot]
    A closed curve $\gamma:\T\to\R^3$ is called an \emph{elastic knot} for a tame knot class $K$ if $\gamma$ is obtained as a $C^1$ limit of global minimizers $\gamma_{\vartheta_j}$ of $\E_{\vartheta_j}$ in $\CK$ along some sequence $\vartheta_j\to0$.
\end{definition}

Note that every elastic knot belongs to $H^2(\T;\R^3)$ and is unit-speed, by the uniform bending-energy bound for the defining sequence of minimizers and its $C^1$ convergence.

A central problem in elastic knot theory is to determine the elastic knot associated with a given knot class and, more broadly, to understand how topological invariants of knots influence the shapes of elastic knots.
Recall that the bridge index $\bri(K)$ measures the least possible number of bridges of a representative of $K$, while the braid index $\bra(K)$ is the minimum number of strands among all closed braid representatives of $K$, and in general $\bri(K)\leq\bra(K)$; see, e.g., \cite[Chapters~10 and~16]{BZH} for standard background.
The circular elastic knot conjecture, first predicted by Gallotti--Pierre-Louis \cite{GPL} and then formulated by Gerlach--Reiter--von der Mosel \cite[Conjecture~7.1]{GRM}, asserts that, whenever $\bri(K)=\bra(K)$, the corresponding multiply covered circle is the unique elastic knot.

In this paper we resolve this conjecture.

\begin{theorem}[Circular elastic knots]\label{thm:main}
    Let $K$ be a tame knot class with $\bri(K)=\bra(K)=a$. Then, up to Euclidean isometries, the unique elastic knot for $K$ is given by the $a$-fold covered circle.
\end{theorem}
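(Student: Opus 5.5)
Let $\gamma$ be an elastic knot for $K$, so $\gamma=\lim_j\gamma_{\vartheta_j}$ in $C^1$, with each $\gamma_{\vartheta_j}\in\CK$ a global minimizer of $\E_{\vartheta_j}$. Since $\gamma$ is unit speed and of class $H^2$, it has length one and total curvature $\TC(\gamma)=\int_\gamma|\kappa|\,ds$. The plan has two halves, an upper bound on the bending energy and a lower bound on the total curvature, and then rigidity in the equality case of Cauchy--Schwarz.

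\textbf{Upper bound.} Since $\bra(K)=a$, $K$ is a closed $a$-braid. For every $\varepsilon>0$ I can therefore build a competitor in $\CK$ that is a small braided perturbation of the $a$-fold circle of length one: place $a$ strands in a thin tube about a circle of radius $1/(2\pi a)$. Its bending energy is at most $(2\pi a)^2+\varepsilon$ and its ropelength is finite; this is the comparison used by Gerlach--Reiter--von der Mosel, or Reiter--von der Mosel. Minimality gives
\[
\B[\gamma_{\vartheta_j}]\le \E_{\vartheta_j}[\text{competitor}] \le (2\pi a)^2+\varepsilon+\vartheta_j C_\varepsilon .
\]
Let $j\to\infty$ and then $\varepsilon\to0$. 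Lower semicontinuity of $\B$ under weak $H^2$ convergence, which holds because the $H^2$ bound plus $C^1$ convergence gives weak $H^2$ convergence, then yields $\B[\gamma]\le 4\pi^2a^2$.

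\textbf{Lower bound.} This is the key step and the main obstacle. I need Milnor's inequality $\TC\ge 2\pi\,\bri(K)$ extended from embedded representatives of $K$ to arbitrary $C^1$-limits (with $H^2$ regularity) of curves in $K$, allowing self-intersections. I would prove it through Milnor's crookedness. For a generic direction $v\in\Sph$, the height function $\langle\gamma,v\rangle$ has finitely many local maxima. Write $\mu(\gamma,v)$ for their number, so that $\TC(\gamma)=\pi\int_{\Sph}\mu(\gamma,v)\,dv/(4\pi)$ by Milnor's formula, which is valid for $H^2$ curves. The task is to show $\mu(\gamma,v)\ge a$ for almost every $v$. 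Fix a direction $v$ that is nondegenerate for $\gamma$: the height function is Morse-like with isolated critical points, and no two critical values coincide. For such $v$ the $C^1$-closeness of $\gamma_j$ to $\gamma$ lets me perturb $\gamma_j$, keeping it embedded and in $K$, into a curve whose height function has exactly $\mu(\gamma,v)$ maxima. Away from neighbourhoods of the critical points of $\gamma$, $\gamma'\cdot v$ is bounded away from zero, so $\gamma_j$ is monotone there. Near each nondegenerate critical point I replace the short arc of $\gamma_j$ by a single-extremum arc. This has to be done without creating crossings, or while changing only a small arc, which can be isotoped within a small ball that avoids the rest of the curve once the pieces are separated. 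Self-intersections of $\gamma$ are the delicate part. At a double point, the two strands of $\gamma_j$ pass close to each other, but the modification only alters their heights locally along each strand's own $v$-monotone structure. I would apply the surgery as a level-preserving isotopy in the direction $v$, namely a straightening of each strand in height, which is a homotopy through embeddings because it preserves the heights of distinct strands at distinct positions. Here nondegeneracy of $v$ is essential, and the set of good $v$ has full measure by Sard-type arguments for $H^2$ (hence $C^{1,1/2}$) curves; this step may require care, and I might instead argue at the level of the number of maxima of $\gamma_j$ in slightly perturbed directions. The resulting embedded representative has $\mu(\gamma,v)$ bridges, so $\mu(\gamma,v)\ge\bri(K)=a$. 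Integrating over $v$ gives $\TC(\gamma)\ge 2\pi a$.

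\textbf{Rigidity.} By Cauchy--Schwarz, since the length is one,
\[
(2\pi a)^2\le\TC(\gamma)^2\le\B[\gamma]\le(2\pi a)^2 .
\]
So $|\kappa|\equiv 2\pi a$ is constant, and equality $\mu(\gamma,v)=a$ holds almost everywhere. I then show the curve is planar and convex-like. Equality in Fenchel-type bounds, where total curvature $2\pi a$ is attained with every generic height function having exactly $a$ maxima, forces the tangent indicatrix to be an $a$-fold great circle. The reason is that the tangent indicatrix is a closed curve on $\Sph$ of length $2\pi a$ meeting almost every great circle in $2a$ points, and by Crofton's formula this characterizes $a$-fold great circles. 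Hence $\gamma$ is planar with constant curvature, that is, the $a$-fold circle, unique up to isometry.
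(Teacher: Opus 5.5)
\textbf{The rigidity step fails.} Your upper bound and the Cauchy--Schwarz chain are fine. They give $|\kappa|\equiv 2\pi a$, $\TC[\gamma]=2\pi a$ and $\mu[\gamma,v]=a$ for a.e.\ $v$. But these do not force the $a$-fold circle: once $a\ge2$, a closed spherical curve of length $2\pi a$ meeting a.e.\ great circle in $2a$ points need not be an $a$-fold great circle.

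For $a=2$, take two circles of radius $1/(4\pi)$ in distinct planes, both tangent at a point $p$ to a line $L$ lying in both planes, and traverse them one after the other at unit speed. This $C^{1,1}$ curve has length one, $|\kappa|\equiv4\pi$, $\B=16\pi^2$, $\TC=4\pi$, and $\mu[\cdot,v]=2$ for a.e.\ $v$. Its tangent indicatrix is the union of two distinct great circles. Such tangential pairs of circles are $C^1$-limits of $(2,q)$-torus knots (insert the $q$ half-twists near $p$). They are exactly the candidates Gerlach--Reiter--von der Mosel had to exclude for the trefoil by a separate quantitative crookedness argument. For larger $a$, bouquets of $a$ equal circles tangent to a common line at a common point satisfy the same equalities.

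Nothing in your argument distinguishes these curves from the $a$-fold circle. Excluding them needs variational information about the penalized minimizers beyond the limiting energy bounds. The paper does not do this step itself: it proves \Cref{thm:milnor}, which is exactly \cite[Assumption~1.1]{ReiterVdM2026}, and then invokes \cite[Theorem~1.2]{ReiterVdM2026} for the identification.

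\textbf{The lower-bound sketch also has a real gap.} Your genericity condition that no two critical values coincide fails for a.e.\ $v$ as soon as $\gamma$ has overlapping arcs. For the $a$-fold circle itself, every critical point of $h_v$ ($v$ off the axis) has $a$ preimages. So several strands of $\gamma_j$ must be modified near one critical image point, and straightening each strand in height independently does not guarantee they stay disjoint. Your sketch also does not handle a branch crossing the critical point transversally, or with the opposite extremum type; there a single-extremum replacement may have to pass one strand through another.

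The paper resolves this in two steps:
\begin{enumerate}
\item For a.e.\ $v$, all branches through a critical image point share the same tangent line and extremum type (\Cref{prop:compatible}). Any violation produces an isolated double pair, so only countably many great circles of directions need to be removed.
\item All strands in the cylinder are written as graphs $f_i$ over this common line and deformed by one ambient contraction $f_i\mapsto Q+\alpha_\lambda(f_i-Q)$ toward a common parabola (\Cref{lem:simultaneous-deformation}). Then $f_i^\lambda-f_k^\lambda=\alpha_\lambda(f_i-f_k)\neq0$, so embeddedness is preserved.
\end{enumerate}

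A minor point: Milnor's formula reads $\TC=\frac12\int_{\Sph}\mu\,dA$. Your normalization would give $\pi a$ rather than $2\pi a$.
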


This result determines elastic knot configurations for a large family of knot classes.
Knot classes satisfying $\bri(K)=\bra(K)$ are called BB knot classes and systematically studied in \cite{DER}.
In particular, the number of (one-component) BB knots grows exponentially with respect to the crossing number \cite[Theorem~5.1]{DER}.

The proof of \Cref{thm:main} naturally consists of two main steps: first, extending Milnor's inequality to the full $C^1$-closure of a knot class, and second, identifying the limiting elastic knot with the multiply covered circle. After the present manuscript had been essentially completed, the author became aware, during the final proofreading stage and prior to public release, of Reiter--von der Mosel's recent work \cite{ReiterVdM2026}, which provides an argument for the second step. The author had independently obtained a proof of this step by a different approach, based on a quantitative one-sided estimate for ropelength under smooth variations. Since the two arguments nevertheless overlap substantially, we have omitted our original treatment from the present paper and instead use the corresponding result of \cite{ReiterVdM2026}. The alternative argument may be presented elsewhere.

The main contribution of this paper is thus the $C^1$-extension of Milnor's inequality, see \Cref{thm:milnor} below.
Once this is established, the result in \cite{ReiterVdM2026} directly implies \Cref{thm:main}.
In what follows, we provide further background on the problem and then prove Milnor's inequality needed for this conclusion.

\subsection{Background}

Gallotti--Pierre-Louis \cite{GPL} combined theoretical arguments with experimental and numerical evidence for the limiting shapes of elastic knots.
In particular, they observed that Milnor's inequality gives
\[
\inf_{\gamma\in\CK}\B[\gamma]\geq (2\pi\bri(K))^2,
\]
while the braid representation easily yields
\[
\inf_{\gamma\in\CK}\B[\gamma]\leq (2\pi\bra(K))^2.
\]
When $\bri(K)=\bra(K)$, these bounds match, leading them to predict a circular limiting configuration.

However, the above energy bounds do not directly ensure that the limiting configuration $\gamma$ of a minimizing sequence still satisfies $\B[\gamma]\geq (2\pi\bri(K))^2$, since the available compactness yields only $H^2$-weak convergence and hence lower semicontinuity of the bending energy, which is in the opposite direction.

Gerlach--Reiter--von der Mosel \cite{GRM} addressed this subtle issue through the ropelength-penalized formulation introduced above.
For an embedded curve $\gamma:\T\to\R^3$, the ropelength is defined as the scale-invariant quantity
\begin{equation}\label{eq:intro-ropelength}
    \Rop[\gamma]:=\frac{\L[\gamma]}{\Delta[\gamma]},
\end{equation}
where $\L[\gamma]:=\int_\gamma ds$ denotes the length and $\Delta[\gamma]$ denotes the thickness. 
Following Gonzalez--Maddocks \cite{GM99}, one definition of the thickness is given by
\begin{equation}\label{eq:intro-thickness}
    \Delta[\gamma]:=
    \inf_{\substack{x,y,z\in\gamma(\T)\\x\neq y\neq z \neq x}}
    \operatorname{rad}(x,y,z), \qquad \operatorname{rad}(x,y,z):=\frac{|x-y|\,|y-z|\,|z-x|}{2|(y-x)\times(z-x)|},
\end{equation}
where $\operatorname{rad}(x,y,z)$ denotes the circumradius of the triple $(x,y,z)$, understood as $\operatorname{rad}(x,y,z)=\infty$ when the three points are collinear. We set $\Rop[\gamma]:=\infty$ if $\Delta[\gamma]=0$.
Then they established the circular elastic knot conjecture for the $(2,q)$-torus knots, where $q$ is odd and $|q|\ge3$, in particular for the trefoil \cite[Theorem~1.1]{GRM}; in this case, the resulting elastic knot is the doubly covered circle.
Their proof first identifies the possible limits as tangential pairs of circles, and then excludes every pair other than the doubly covered circle by a quantitative crookedness estimate.
A key step is a $C^1$-extension of the F\'ary--Milnor theorem \cite[Theorem~A.1]{GRM}.

More recently, Reiter--von der Mosel \cite{ReiterVdM2026} proved the circular elastic knot conjecture, \emph{assuming} that Milnor's total-curvature inequality extends to the $C^1$-closure of a knot class. Our main result, \Cref{thm:milnor} below, provides precisely this missing input; the remaining variational and classification arguments are contained in \cite{ReiterVdM2026}.

A subtlety in elastic knot theory is how to capture the effect of thickness as it tends to zero.
We note that the vanishing-thickness constrained problem in Gallotti--Pierre-Louis \cite{GPL} and the vanishing-ropelength-penalty problem of Gerlach--Reiter--von der Mosel \cite{GRM} are distinct variational formulations; the motivation above concerns the latter formulation.
Earlier work of von der Mosel \cite{vdM} considered yet another formulation, involving a vanishing Coulomb-type self-repulsive penalty.
It is worth comparing those formulations in several contexts, particularly in view of the circular elastic knot problem.

Beyond the circular setting, Bartels--Reiter \cite{BR} numerically observed a planar candidate for the elastic knot in the figure-eight knot class $4_1$. In \cite{MR4861585}, an explicit critical teardrop--heart configuration matching this candidate was identified analytically, and was conjectured to give the elastic knot of class $4_1$.
The spherical elastic knot conjecture posed in \cite[Conjecture~7.2]{GRM} also remains open.
Further developments include the study of symmetric elastic knots by Gilsbach--Reiter--von der Mosel \cite{GRvM}.
A non-minimal but stable elastic configuration in the trivial knot class, termed the elastic propeller, is predicted in \cite{MR4631455} (see also \cite{MR5054189}).

We also mention that the equality $\bri(K)=\bra(K)$ is itself of independent knot-theoretic interest. 
Recently, Krishna--Morton conjectured that it holds for all $L$-space knots and all positive braid knots \cite[Conjectures~1.8 and~1.10]{KrishnaMorton}. 
They proved the equality for twist positive $L$-space knots \cite[Theorem~1.3]{KrishnaMorton}, and Himeno subsequently extended this result to all twist positive knots \cite[Theorem~1.2]{Himeno}.

\subsection{Extension of Milnor's inequality}

Now we discuss our main result that extends Milnor's inequality to the $C^1$-closure of a knot class. 
Recall that the classical total curvature for a smooth regular closed curve $\gamma$ is given by
\begin{equation}\label{eq:def-classical-total-curvature}
    \TC[\gamma]=\int_\T |\kappa| \,ds,
\end{equation}
and Milnor's inequality asserts that, if an embedding $\gamma$ represents $K$, then
\begin{equation}\label{eq:intro-total-bound}
    \TC[\gamma]\ge2\pi \bri(K).
\end{equation}

Here we extend Milnor's inequality to possibly non-embedded curves, which arise as $C^1$-limits of embedded curves representing $K$.
It is well known that the definition of total curvature extends to arbitrary irregular curves.
For any polygon $P$ in $\R^3$ (i.e., piecewise affine closed curve), we define
\begin{equation}
    \TC[P]:=(\text{the total variation of the tangent indicatrix of $P$ in $\Sph$}),
\end{equation}
and then for any curve $\gamma:\T\to\R^3$, 
\begin{equation}
    \TC[\gamma]:=\sup\{ \TC[P] : \text{$P$ is a polygon inscribed in the closed curve $\gamma$} \};
\end{equation}
see, e.g., \cite{Sullivan} for more details.

Our main result then reads as follows.

\begin{theorem}[$C^1$-extension of Milnor's inequality]\label{thm:milnor}
    Let $K$ be a tame knot class and $\gamma:\T\to\R^3$ be a regular $C^1$ closed curve with $\TC[\gamma]<\infty$.
    Suppose that there is a sequence of $C^1$ embeddings $\gamma_j\colon\T\to\R^3$ representing the knot class $K$ such that $\gamma_j\to\gamma$ in $C^1$. 
    Then
    \[
    \TC[\gamma]\ge2\pi \bri(K).
    \]
\end{theorem}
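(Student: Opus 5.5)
The plan is to follow Milnor's integral-geometric proof, based on counting local maxima of height functions, and to pass the topological lower bound to the limit one direction at a time. For $v\in\Sph$ let $\mu(\gamma,v)\in\{0,1,2,\dots,\infty\}$ be the number of local maxima of the height function $h_v:=\langle\gamma,v\rangle$ on $\T$. For curves of finite total curvature, Milnor's formula reads
\[
\TC[\gamma]=\frac12\int_{\Sph}\mu(\gamma,v)\,d\sigma(v),
\]
which for a $C^1$ curve is essentially the Crofton formula applied to the tangent indicatrix $\gamma'/|\gamma'|$. This indicatrix is a rectifiable curve in $\Sph$ of length $\TC[\gamma]<\infty$. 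Since $\sigma(\Sph)=4\pi$, the theorem follows once we show that $\mu(\gamma,v)\ge\bri(K)$ for $\sigma$-a.e.\ $v$.

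\textbf{Step 1: generic directions.} By Crofton, for a.e.\ $v$ the great circle $v^\perp$ meets the indicatrix in finitely many points $t_1,\dots,t_N$. I also want, for a.e.\ $v$, that each such meeting is a genuine crossing, so that $\gamma'\cdot v$ changes sign at every $t_i$. I expect this to follow from a coarea/Sard-type argument for Lipschitz curves on $\Sph$. Then $h_v$ has only strict local extrema, there are exactly $\mu=\mu(\gamma,v)$ maxima and $\mu$ minima, and on each complementary arc $h_v$ is strictly monotone. Fix small $\varepsilon>0$ and intervals $I_i=(t_i-\varepsilon,t_i+\varepsilon)$. On $\T\setminus\bigcup I_i$ we have $|\gamma'\cdot v|\ge c>0$. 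By $C^1$ convergence, $|\gamma_j'\cdot v|\ge c/2$ there for large $j$, with the same sign. On each $I_i$, $\gamma_j$ is a $C^1$-small perturbation of an almost straight, nearly horizontal arc.

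\textbf{Step 2: ambient modification near the critical points.} For large $j$ I want an ambient diffeomorphism $\Phi$ such that $\langle\Phi\circ\gamma_j,v\rangle$ has exactly one critical point in each $I_i$, namely a maximum or a minimum according to the type of $t_i$, and none elsewhere. Then $\Phi\circ\gamma_j$ represents $K$ and has $\mu$ maxima, so $\bri(K)\le\mu$. The key point is to use \emph{ambient} maps, which preserve embeddedness automatically even though $\gamma$ itself may self-intersect or have overlapping arcs. In coordinates $(x,y,z)$ with $z$ along $v$, I would use vertical shears of the form
\[
\Phi(x,y,z)=\bigl(x,y,z+\chi(z)\,q(x,y)\bigr).
\]
Here $q$ is supported in a small horizontal disk of radius $r$ about the projection of $\gamma(t_i)$, with $q=-C|(x,y)-p|^2$ near a maximum and $+C|(x,y)-p|^2$ near a minimum. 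The function $\chi$ is a vertical cutoff, and $|\chi' q|\lesssim Cr<1$ guarantees that $\Phi$ is a diffeomorphism. The effects on the pieces of $\gamma_j$ are as follows. Every nearly horizontal strand through the region becomes strictly concave (respectively convex) in height, so it has a single critical point of the right type. Steep strands, those with $|\gamma_j'\cdot v|\ge c/2$, have their slopes changed by only $O(Cr)$, so they stay monotone once $r\ll c/C$. Choosing $C$ large compared with $\sup_{I_i}|\gamma_j'\cdot v|\to0$ and then $r$ small makes this consistent.

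\textbf{Main obstacle.} The hardest step is that distinct critical parameters $t_i\ne t_k$ may have $\gamma(t_i)=\gamma(t_k)$, or may lie on overlapping arcs of $\gamma$. If both are maxima, or both minima, a single shear handles them together. A maximum and a minimum at the same point, however, require opposite signs of $q$. My first attempt would be to shrink the exceptional set of $v$ further: show that for a.e.\ $v$ no point of $\gamma(\T)$ is simultaneously the image of a maximum and of a minimum of $h_v$. If that fails, I would replace the radial quadratic by a $q$ that is concave along the horizontal tangent direction of the maximum strand and convex along that of the minimum strand. Touching strands with opposite curvature in height have tangents $\pm w$ in the same horizontal direction $w$, so this would need a finer, direction-dependent or height-separating choice of $\chi$ that treats the two strands separately. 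Once these coincidences are controlled, Steps 1 and 2 give $\mu(\gamma,v)\ge\bri(K)$ for a.e.\ $v$, and integrating yields $\TC[\gamma]\ge 2\pi\bri(K)$.
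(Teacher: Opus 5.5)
Your overall architecture matches the paper's. Milnor's formula reduces everything to $\mu[\gamma,v]\ge\bri(K)$ for a.e.\ $v$. Generic directions have finitely many sign-changing critical points. Then $\gamma_j$ is modified by an ambient map near the finitely many critical image points.

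\textbf{The gap: Step 2 fails.} An additive vertical shear cannot reduce each strand to a single critical point. Since $\gamma_j\to\gamma$ only in $C^1$, the second derivatives of $z_j=\langle\gamma_j,v\rangle$ along the strands are unbounded in $j$. (The $\gamma_j$ must also first be smoothed, e.g.\ via Blatt's $C^1$ isotopy stability, before ``concave'' makes sense.)
\begin{itemize}
\item For example, $z_j(x)=-ax^2/2+j^{-4}\sin(j^3x)$ converges in $C^1$ to $-ax^2/2$, yet $z_j'$ has on the order of $j^2$ zeros near $0$. After adding $-Cx^2$ it still has many zeros unless $C\gtrsim j^2$.
\item Taking $C$ large relative to $\sup|\gamma_j'\cdot v|$, as you propose, only confines the critical points to a small neighbourhood; it does not make them unique.
\item Your $q$ is negative at the centre and vanishes on the boundary of its support. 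So its radial derivative is positive, of size about $Cr$, somewhere in the cutoff annulus, which works against the outward descent of a maximum-type strand. The annulus must therefore sit where the strand's slope exceeds $Cr$. That slope is essentially the slope of $\langle\gamma,v\rangle$ at distance $\sim r$, a quantity fixed by $\gamma$ alone; in the example this forces $C\lesssim a$.
\end{itemize}
These requirements are incompatible, so $\mu[\Phi\circ\gamma_j,v]$ is not controlled. A smaller point in Step 1: if $\gamma$ has straight segments, the indicatrix is constant on parameter intervals. One must then also discard the countably many great circles orthogonal to those tangent values.

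\textbf{The missing idea: a multiplicative deformation.} The paper writes the branches at a critical image point as disjoint graphs $f_i=(y_i,z_i)$ over a common axis and replaces them by $Q+\alpha(f_i-Q)$.
\begin{itemize}
\item Here $Q=(0,\varepsilon(r^2-x^2))$, and $\alpha\in[\delta,1]$ equals $\delta$ on the core and $1$ near the ends.
\item $\delta$ is chosen \emph{after} $j$, in terms of $\max_i\|z_i''\|_{L^\infty}$. This multiplies the uncontrolled second derivatives by $\delta$ and gives $\widehat z_i''<-\varepsilon$ on the core.
\item On the transition region, $\alpha'\ge0$ and $z_i-q<0$ keep $\widehat z_i'$ of the correct sign, so no wrong-sign annulus appears.
\item The map is an injective affine contraction on each slice $\{x\}\times\R^2$. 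Hence it preserves disjointness of the branches and is realized as the flow of a cut-off vector field.
\end{itemize}

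This construction needs all branches through the point to be graphs over one line. That is where your unresolved ``main obstacle'' must be settled, and it is easy. Suppose $s\in Z_v$, $\gamma(t)=\gamma(s)$ and $t\ne s$, and either
\begin{itemize}
\item $\gamma'(s)$ and $\gamma'(t)$ are independent (use the inverse function theorem on the projected difference map), or
\item $s$ is a strict maximum and $t$ a strict minimum (then $h_v<c$ near $s$ and $h_v>c$ near $t$).
\end{itemize}
In either case $(s,t)$ is an isolated point of $\{(s,t):s\ne t,\ \gamma(s)=\gamma(t)\}$. Isolated points form a countable set. Excluding the countably many great circles orthogonal to $\gamma'(s)$ at their coordinates therefore gives, for a.e.\ $v$, a common tangent line and a common extremum type at every critical image point.
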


Gerlach--Reiter--von der Mosel \cite[Theorem~A.1]{GRM} proved the F\'ary--Milnor theorem $\TC[\gamma]\geq4\pi$ for the $C^1$-closure of any nontrivial knot class $K$, using Denne’s result on the existence of alternating quadrisecants.
Wacker \cite[Theorem~3.1]{Wacker} established \Cref{thm:milnor} assuming that the limit curve $\gamma$ has only finitely many isolated self-intersections, and posed the extension to arbitrary self-intersections as an open problem.
\Cref{thm:milnor} resolves this problem.

Wacker's proof of an extension of Milnor's inequality constructs a global recovery sequence whose total curvature converges to that of the given limit curve $\gamma$. 
Our argument takes a different strategy: we do \emph{not} prove the convergence of the total curvature, but instead, working direction by direction, we only recover the number of extrema of a generic height function. 
This weaker recovery is sufficient to extend Milnor's inequality and, crucially, avoids any finiteness or isolation assumption on the self-intersections.

Now we explain our proof ideas in detail.
Following Milnor \cite{Milnor}, for a regular $C^1$ closed curve $\gamma$ and a direction $v\in\Sph$, we define the crookedness of $\gamma$ in the direction $v$ by
\begin{equation}\label{eq:intro-crookedness}
    \mu[\gamma,v]
    :=\#\left\{t\in\T:
    t\text{ is a local maximum of }\langle\gamma(\cdot),v\rangle\right\},
\end{equation}
which may be infinite.
With this notation, the bridge index can be expressed as
\begin{equation}\label{eq:intro-bridge}
    \bri(K)=\min\left\{\mu[\eta,v]:
    \begin{array}{l}
        \eta\in C^\infty(\T;\R^3)\text{ an embedding representing }K,\\
        v\in\Sph
    \end{array}\right\},
\end{equation}
see \cite[Section~27.2]{MR4439733}.
In addition, for a curve of finite total curvature $\gamma$, Milnor's integral formula \cite[Theorem~3.1]{Milnor} relates $\TC[\gamma]$ to the crookedness \eqref{eq:intro-crookedness}:
\begin{equation}\label{eq:intro-milnor-formula}
    \TC[\gamma]=\frac12\int_{\Sph}\mu[\gamma,v]\,dA(v),
\end{equation}
where $dA$ is the usual area measure with $A(\Sph)=4\pi$.

Thus we find that both the total curvature and the bridge index are closely related to the concept of crookedness.
In particular, for our purpose, it is sufficient to prove the following pointwise estimate for crookedness.

\begin{theorem}[Pointwise crookedness estimate]\label{thm:milnor_pointwise}
    Let $K$ and $\gamma$ satisfy the assumptions in \Cref{thm:milnor}.
    Then
    \begin{equation}\label{eq:intro-directional-bound}
        \mu[\gamma,v]\ge \bri(K)
        \qquad\text{for almost every }v\in\Sph.
    \end{equation}
\end{theorem}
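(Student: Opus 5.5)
The plan is to show that, for a generic direction $v$, the embeddings $\gamma_j$ can be isotoped, for $j$ large, to embeddings whose height function in direction $v$ has exactly $\mu[\gamma,v]$ local maxima. Then \eqref{eq:intro-bridge} gives $\bri(K)\le\mu[\gamma,v]$.

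\emph{Choice of generic directions.} Since $\TC[\gamma]<\infty$, the tangent indicatrix $\tau=\gamma'$ is a rectifiable curve on $\Sph$. By the Crofton formula, for a.e.\ $v$ the great circle $v^\perp$ meets $\tau(\T)$ in finitely many parameters $t_1<\dots<t_{2m}$, and each meeting is a transversal crossing, so $\langle\gamma',v\rangle$ changes sign there. Hence $h=\langle\gamma,v\rangle$ alternates between $m$ strict local maxima and $m$ strict local minima at these points, so $m=\mu[\gamma,v]$. Fix such a $v$ and small disjoint intervals $I_i\ni t_i$. On $\T\setminus\bigcup I_i$ we have $|\langle\gamma',v\rangle|\ge\delta>0$. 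By $C^1$ convergence, for large $j$ the curve $\gamma_j$ is strictly monotone in height off $\bigcup I_i$, with the same signs as $\gamma$.

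\emph{Local straightening of caps.} Inside $I_i$, $\gamma_j'$ is uniformly close to the horizontal unit vector $w_i=\gamma'(t_i)$, so $\gamma_j$ may wiggle there. In coordinates $(x,y,z)$ with $z$ along $v$ and $x$ along $w_i$, the arc $\gamma_j(I_i)$ is a graph $x\mapsto(y_j(x),g_j(x))$ with $\|g_j'\|_\infty=o(1)$. Choose a function $\tilde g_j$ with a single maximum (or minimum), agreeing with $g_j$ near the ends, with $\|\tilde g_j-g_j\|_{C^1}=o(1)$. This is possible because both derivatives are $o(1)$, so a monotone $\tilde g_j'$ can be chosen close to $g_j'$. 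Now apply the ambient map $\Phi_j(x,y,z)=(x,y,z+\psi_j(x,y,z))$ with
\[
\psi_j=\chi\,(\tilde g_j(x)-g_j(x)),
\]
where $\chi$ is a fixed cutoff near the cap. Since $\|\psi_j\|_{C^1}\to0$, the map $\Phi_j$ and the straight-line path to it are diffeomorphisms isotopic to the identity, so the knot type is preserved. This is the point of using an ambient map rather than moving only the arc: other strands of $\gamma_j$ entering the support (which happens when $\gamma$ has self-intersections through $\gamma(t_i)$) are moved along too, without crossings. Such strands are steep, meaning $|\partial_t z|\ge c|\partial_t(x,y)|$, and $D\Phi_j$ is $C^0$-close to the identity, so they stay strictly monotone in height. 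Doing this at every cap, and analogously at every cup, yields an embedding in $K$ with exactly $m$ maxima in direction $v$. After a small smoothing (keeping the nondegenerate extrema) it is a $C^\infty$ embedding, and \eqref{eq:intro-bridge} gives $\bri(K)\le m$.

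\emph{Main obstacle.} The delicate step is the claim that every other strand passing through the support of $\chi$ is steep. This fails if two critical parameters $t_i\ne t_k$ have $\gamma(t_i)=\gamma(t_k)$, or if a critical point lies on the closure of a non-steep portion near a double point. In that case two almost horizontal caps of $\gamma_j$ collide in the limit and cannot be treated separately. I would first try to show that the set of $v$ for which this happens is null: it requires $v\perp\gamma'(t_i)$ and $v\perp\gamma'(t_k)$ at a double point, which constrains $v$ to a set parametrized by the (Crofton-finite) critical structure. If that fails, I would handle a whole cluster of caps simultaneously with one ambient map. Since the caps are $C^1$-close to horizontal graphs over possibly different directions, I would flatten them one at a time, in order of height, by vertical shears. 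This relies on the fact that vertical shears small in $C^1$ preserve the embedding and the vertical order along each fibre.
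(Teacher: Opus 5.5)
\paragraph{The main gap.} The gap is the case you flag yourself as the main obstacle, and neither of your remedies closes it.

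Several critical parameters with a common image is not a null-set phenomenon. Take the doubly covered circle $t\mapsto\frac{1}{4\pi}(\cos4\pi t,\sin4\pi t,0)$. It is a $C^1$ limit of trefoils and is exactly the configuration behind \Cref{thm:main}. For \emph{every} $v\neq\pm e_3$, each extremal point of this curve has two critical preimages, $t$ and $t+\frac12$. Your constraint ``$v\perp\gamma'(t_i)$ and $v\perp\gamma'(t_k)$'' is only one condition when the two tangents are parallel, and parallel tangents are automatic at non-isolated double points. Proving the theorem for arbitrary self-intersections means handling precisely this case.

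The fallback of flattening caps one at a time by $C^1$-small vertical shears also fails as stated. Near such a point, the branches of $\gamma_j$ are graphs over a common line. They converge to one another only in $C^1$, their second derivatives are uncontrolled, and they may cross in the horizontal projection with changing vertical order, so ``in order of height'' is not even defined.
\begin{itemize}
\item A shear depending on $x$ alone adds the same correction to every branch. It turns $g_2$ into $\tilde g_1+(g_2-g_1)$, which can have many critical points where $\tilde g_1'$ is near $0$.
\item A shear that moves one branch but not a nearby one needs a gradient of order (correction)/(distance between branches), so it is not $C^1$-small. Where the two branches share a vertical fibre, it preserves embeddedness only if the flattened branch stays on the same side of the other, which nothing guarantees.
\end{itemize}

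\paragraph{What is missing.} You need two things: (a) a genericity statement that tames the cluster, and (b) a deformation acting on all branches at once.

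For (a), the paper shows that for a.e.\ $v$, every preimage of a critical image point is itself critical, with the same tangent line and the same type of extremum. A transverse double point, or a maximum and a minimum meeting at one point, is an isolated point of $D=\{(s,t):s\neq t,\ \gamma(s)=\gamma(t)\}$. Isolated points of $D$ are countable, so only countably many great circles $\{v:\langle T(s),v\rangle=0\}$ have to be removed. After this, all branches through $p$ are graphs $f_i=(y_i,z_i)$ over one $x$-axis, and they are all caps (or all cups).

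For (b), the paper contracts each slice $\{x\}\times\R^2$ by the homothety $w\mapsto Q(x)+\alpha(x)(w-Q(x))$ toward the concave parabola $Q(x)=(0,\varepsilon(r^2-x^2))$. Differences $f_i-f_k$ are multiplied by $\alpha>0$, so disjointness is preserved and the map is realized by an ambient isotopy, even though it is far from $C^1$-small. With $\alpha=\delta$ small on $[-r/3,r/3]$, the heights $q+\delta(z_i-q)$ are strictly concave there, which gives exactly one maximum per branch.

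\paragraph{Smaller points.} Your treatment of steep non-critical strands is correct, but it becomes unnecessary once transverse crossings are removed generically. Also, Crofton gives only finiteness of $\{t:\langle\gamma'(t),v\rangle=0\}$ for a.e.\ $v$. That each zero is a sign change needs its own argument; the paper uses the area formula for $(u,\theta)\mapsto\cos\theta\,N_1(u)+\sin\theta\,N_2(u)$ along the arclength-parametrized indicatrix.
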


To prove this, we first show that for almost every $v$ the height function of a finite-total-curvature $C^1$ curve has only finitely many critical points, each a strict extremum; in addition, at the image of any critical point, all branches have the same tangent line and the same type of extremum (maximum or minimum). 
We then deform the corresponding branches of the approximating curve $\gamma_j$ simultaneously, preserving their knot type and leaving exactly one extremum on each branch. 
This produces a smooth representative $\beta_{j,v}$ of $K$ with $\mu[\beta_{j,v},v]=\mu[\gamma,v]$.
Using \eqref{eq:intro-bridge} gives \Cref{thm:milnor_pointwise}.
The detailed proof is given in \Cref{sec:milnor}.

\section{Milnor's inequality in the \texorpdfstring{$C^1$}{C1}-closure}\label{sec:milnor}

We begin with the proof of \Cref{thm:milnor_pointwise}, from which \Cref{thm:milnor} will follow immediately. 
We write $d_{\Sph}$ for the spherical distance.
A $C^1$ curve $\gamma:\T\to\R^3$ is called regular if its derivative never vanishes. Let $T:=\gamma'/|\gamma'|$ denote the unit tangent. The total curvature of $\gamma$ is then given by the spherical length of the tangent indicatrix $T:\T\to\Sph$ \cite[Proposition~3.1]{Sullivan}:
\begin{equation}\label{eq:tc-definition}
    \TC[\gamma]=\L_{\Sph}[T]
    :=\sup_{0=t_0<\cdots<t_m=1}
      \sum_{i=1}^m d_{\Sph}\bigl(T(t_{i-1}),T(t_i)\bigr).
\end{equation}
In particular, if $\gamma$ is parametrized by arclength and belongs to $W^{2,1}$, then $\TC[\gamma]=\int_\gamma|\gamma''|\,ds$.
More generally, finite total curvature is equivalent to bounded variation of the unit tangent.

\subsection{Good directions}

Here we justify the generic properties of height functions. 
For $v\in\Sph$, set
\begin{equation}\label{eq:height-critical}
    h_v(t):=\langle\gamma(t),v\rangle,
    \qquad Z_v:=\{t\in\T:h_v'(t)=0\}.
\end{equation}

We first show that, for a generic direction, the set of critical points is finite and consists of strict extrema.

\begin{proposition}\label{prop:generic-directions}
    Let $\gamma:\T\to\R^3$ be a regular $C^1$ closed curve with finite total curvature. There is a null set $E_0\subset\Sph$ such that, for every $v\notin E_0$, the set $Z_v$ is finite and $h_v'$ changes sign at every point of $Z_v$.
\end{proposition}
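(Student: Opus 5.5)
We work with the unit tangent $T:\T\to\Sph$, which is continuous and of bounded variation, and note that $h_v'(t)=|\gamma'(t)|\langle T(t),v\rangle$. Since $|\gamma'|>0$, the zero set $Z_v$ and the sign pattern of $h_v'$ are those of $g_v(t):=\langle T(t),v\rangle$. Thus $Z_v=T^{-1}(C_v)$, where $C_v:=\{w\in\Sph:\langle w,v\rangle=0\}$ is the great circle with pole $v$. The plan is to control, for almost every $v$, how many times and in what manner the curve $T$ meets $C_v$, using an integral-geometric (Crofton-type) argument.

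\textbf{Step 1: finiteness.} We use the Crofton formula on $\Sph$ for rectifiable curves: for a continuous curve $T$ of finite spherical length,
\[
\int_{\Sph}\#\bigl(T^{-1}(C_v)\bigr)\,dA(v)=c\,\L_{\Sph}[T]<\infty ,
\]
with a universal constant $c$ (for the normalization of $dA$ used here, $c=4$, which recovers Milnor's integral formula). Combined with \eqref{eq:tc-definition}, this shows that $\#Z_v<\infty$ for $v$ outside a null set $E_1$. To be self-contained, we prove the formula by approximation. For an inscribed geodesic polygon $T_m$ built from a partition, each geodesic arc of length $\ell$ meets $C_v$ for a set of $v$ of measure proportional to $\ell$. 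The counts $\#T_m^{-1}(C_v)$ bound from below the number of sign changes of $g_v$ along the partition points, and we let the partition refine. This gives a lower-semicontinuity inequality $\int\#\{\text{sign changes}\}\,dA\le c\,\L_{\Sph}[T]$. Intervals on which $g_v\equiv0$ would make $T$ lie on $C_v$ along an arc. To handle them, we note that a rectifiable curve has $\sigma$-finite one-dimensional measure of image, so only countably many great circles can contain an arc of $T(\T)$ of positive length. The set of corresponding $v$ is therefore countable and hence null. We also remove those $v$ for which $T$ passes through $\pm v$, since such $v$ form the null set $T(\T)\cup(-T(\T))$, a rectifiable set of zero area.

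\textbf{Step 2: sign change.} Next we must exclude touching zeros, that is, points $t\in Z_v$ where $g_v$ has the same sign on both sides. The idea is that such a $v$ is a tangency situation. If $g_v$ touches zero at $t_0$ without crossing, then for $v'$ near $v$ on one side the circle $C_{v'}$ meets $T$ near $t_0$ at least twice, while for $v'$ on the other side it meets $T$ there zero times. The refined form of Crofton counts crossings with multiplicity, so we use instead the following trick. Let $N(v)$ be the number of sign changes of $g_v$, and let $\tilde N(v):=\#Z_v$. The Crofton integral computes $\int N\,dA$ (through the polygonal approximation, where the limit of the polygon counts equals the number of genuine crossings) and also $\int\tilde N\,dA$. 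For the second, we use the area formula applied to the map $(t,\theta)\mapsto$ the pole of a great circle through $T(t)$, which is Lipschitz in the arclength parameter of $T$; its degree count gives $\#T^{-1}(C_v)$ for almost every $v$. Since both integrals equal $c\,\L_{\Sph}[T]$ and $N\le\tilde N$, we conclude $N=\tilde N$ almost everywhere. This shows that every zero is a sign change, and in particular that no zero is isolated but non-crossing.

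\textbf{Main obstacle.} The main difficulty is making the two Crofton identities rigorous for a merely BV (rectifiable, continuous) $T$. One must show that the polygon counts converge to the number of sign changes, and that the area formula counts every preimage. We would first reparametrize $T$ by spherical arclength, using constancy intervals of the parameter to collapse pieces where $T$ is constant; those pieces correspond to straight segments of $\gamma$ and lie in $Z_v$ only when $v\perp T$ there, which again is a null set of $v$. After this reparametrization, $T$ is Lipschitz, and we apply the area formula to $\Phi(s,\theta)=T(s)\times e(\theta)$, normalized, where $e(\theta)$ rotates in $T(s)^\perp$. This yields $\int\#Z_v\,dA=c\,\L_{\Sph}[T]$ directly. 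The matching lower bound for sign changes then comes from Step 1's polygonal argument. Setting $E_0$ to be the union of all exceptional null sets completes the proof.
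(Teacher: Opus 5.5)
Your proof is correct in outline, but it takes a genuinely different route from the paper.

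\emph{The paper's route.} It uses the same incidence map $F(u,\theta)=\cos\theta\,N_1(u)+\sin\theta\,N_2(u)$, built from a Lipschitz parallel frame along the arclength reparametrization $\tau$ of $T$. It then applies the area formula in a Sard-type way. The images under $F$ of the non-differentiability set, and of the set where $JF=|\langle\tau',F\rangle|$ vanishes, are null. So for a.e.\ $v$, every interior zero $u_0$ of $\langle\tau,v\rangle$ satisfies $\langle\tau'(u_0),v\rangle\neq0$. Sign change and isolation then hold pointwise. Finiteness follows by compactness after discarding the great circle orthogonal to $T(0)$. The passage back to the parameter $t$ discards the countably many great circles orthogonal to the values of $\tau$ on constancy intervals of $\sigma$.

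\emph{Your route.} You use the area formula globally, $\int_{\Sph}\#Z_v\,dA=\int JF=4\,\L_{\Sph}[T]$, after discarding those same countably many circles, as you do. You then get the sign changes from a squeeze: $N\le\#Z_v$ together with $\int N\,dA\ge 4\,\L_{\Sph}[T]$ forces $N=\#Z_v$ a.e.

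\emph{Comparison.} The paper's route yields the stronger first-order transversality, and it needs neither the Crofton constant nor polygons. Yours needs no differentiability at the zeros, and it reproves the Crofton (hence Milnor) integral formula along the way.

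\emph{One correction.} In Step 1 you derive $\int N\,dA\le c\,\L_{\Sph}[T]$ from polygons and say this gives $\#Z_v<\infty$. Neither is what your squeeze uses.
\begin{enumerate}
\item Inscribed polygons cannot see touching zeros, so finiteness of $Z_v$ must come from the area formula.
\item The upper bound on $\int N\,dA$ is trivial once $N\le\#Z_v$ is known. What you need is the reverse inequality, $\int N\,dA\ge 4\,\L_{\Sph}[T]$.
\end{enumerate}
To prove the reverse inequality, take a partition fine enough that consecutive vertices $T(t_{i-1}),T(t_i)$ are at spherical distance $<\pi$. Take $v$ off the countably many great circles through the vertices, and with $Z_v$ finite. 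Then each geodesic edge meets $C_v$ (exactly once) if and only if $g_v(t_{i-1})g_v(t_i)<0$. Each such alternation produces a distinct sign-changing zero of $g_v$ in $(t_{i-1},t_i)$. Hence $\#T_m^{-1}(C_v)\le N(v)$ for a.e.\ $v$, which gives $4\,\L_{\Sph}[T_m]\le\int N\,dA$. Choosing partitions whose polygon length approaches $\L_{\Sph}[T]$ then closes the squeeze.

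Also make sure the frame $e(\theta)\in T(s)^\perp$ is chosen Lipschitz in $s$ (e.g.\ a parallel frame, or local Gram--Schmidt), since the area formula requires a Lipschitz map.
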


A subtle point is that $\gamma$ is merely $C^1$, so $h_v'$ need not be differentiable.
To overcome this, we work with the arclength reparametrization of the tangent indicatrix and establish transversality in that parameter.

\begin{lemma}\label{lem:tangent-factorization}
    Suppose that $\gamma:\T\to\R^3$ is a regular $C^1$ closed curve with $\ell:=\TC[\gamma]<\infty$. Then $\ell>0$, and there exist a continuous non-decreasing surjection $\sigma\colon[0,1]\to[0,\ell]$ and a Lipschitz curve $\tau\colon[0,\ell]\to\Sph$ such that
    \begin{equation}\label{eq:tangent-factorization}
        T=\tau\circ\sigma,
        \qquad |\tau'|=1\quad\text{a.e.}
    \end{equation}
    The set of $u$ for which $\sigma^{-1}(\{u\})$ is not a singleton is at most countable.
\end{lemma}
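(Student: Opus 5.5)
The plan is the standard arclength reparametrization of a rectifiable curve, here applied to the tangent indicatrix $T:[0,1]\to\Sph$ (viewing $\T$ as $[0,1]$ with endpoints identified).

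\emph{Step 1: $\ell>0$.} Since $\gamma$ is closed and regular, $\int_0^1\gamma'(t)\,dt=0$. If $\ell=0$, then $T$ would be constant, say $T\equiv e$. Then $\langle\gamma'(t),e\rangle=|\gamma'(t)|>0$, so $\int_0^1\langle\gamma',e\rangle\,dt>0$, which is a contradiction.

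\emph{Step 2: the factorization.} Define
\[
\sigma(t):=\L_{\Sph}\bigl[T|_{[0,t]}\bigr],
\]
the spherical length of $T$ restricted to $[0,t]$, defined by the supremum over partitions as in \eqref{eq:tc-definition}. The following properties of $\sigma$ are needed.
\begin{itemize}
\item $\sigma$ is non-decreasing, with $\sigma(0)=0$ and $\sigma(1)=\ell$.
\item $\sigma$ is additive over subintervals.
\item $\sigma$ is continuous. This is the standard fact that the variation function of a continuous BV map is continuous. I would prove it by contradiction: if $\sigma$ jumped at $t_0$ by $\delta>0$, then every partition near $t_0$ would carry length at least $\delta$ in arbitrarily small intervals around $t_0$. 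Refining such partitions and using continuity of $T$ at $t_0$ then contradicts the finiteness of the total length.
\end{itemize}
Continuity and monotonicity give surjectivity onto $[0,\ell]$. Set $\tau(u):=T(t)$ for any $t\in\sigma^{-1}(\{u\})$. This is well defined: if $\sigma(t_1)=\sigma(t_2)$ with $t_1<t_2$, then
\[
d_{\Sph}(T(t_1),T(t_2))\le\sigma(t_2)-\sigma(t_1)=0.
\]
The same inequality gives
\[
d_{\Sph}(\tau(u_1),\tau(u_2))\le|u_1-u_2|,
\]
so $\tau$ is $1$-Lipschitz, $|\tau'|\le1$ a.e., and $T=\tau\circ\sigma$ by construction.

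\emph{Step 3: $|\tau'|=1$ a.e.} Since $\tau\circ\sigma=T$ and $\sigma$ is a monotone continuous surjection, partitions of $[0,1]$ and of $[0,\ell]$ correspond. Hence
\[
\L_{\Sph}\bigl[\tau|_{[a,b]}\bigr]=b-a
\quad\text{for all } 0\le a<b\le\ell .
\]
For a Lipschitz curve, length equals $\int_a^b|\tau'|$; this holds for Lipschitz maps into $\Sph\subset\R^3$, since the spherical and chordal metrics agree infinitesimally. So $\int_a^b|\tau'|=b-a$ for all $a<b$. Combined with $|\tau'|\le1$, this forces $|\tau'|=1$ a.e.

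\emph{Step 4: countability of non-singleton fibers.} For each $u$, the fiber $\sigma^{-1}(\{u\})$ is a closed interval because $\sigma$ is monotone and continuous. These fibers are pairwise disjoint. Those that are not singletons are nondegenerate intervals in $[0,1]$, so each contains a rational, and there are therefore at most countably many of them.

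\emph{Main obstacle.} The main point requiring care is the continuity of $\sigma$ in Step 2, together with the identity between length and $\int|\tau'|$ for Lipschitz curves in Step 3. Both are classical, and I would cite them (e.g., the standard theory of rectifiable curves in metric spaces) rather than reprove them in detail.
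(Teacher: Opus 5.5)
Your proof is correct and follows the same route as the paper. Both use the standard arclength reparametrization of the rectifiable curve $T$ in $\Sph$: the paper cites it from Burago--Burago--Ivanov, Prop.~2.5.9, while you reconstruct it via the variation function $\sigma$, the $1$-Lipschitz bound, and $\int_a^b|\tau'|=b-a$. Both then use the same disjoint nondegenerate intervals argument for countability of the non-singleton fibers, and your integral argument for $\ell>0$ just spells out the paper's remark that a constant $T$ contradicts closedness.
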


\begin{proof}
    If $\ell=0$, then $T$ is constant, contradicting $\gamma$ being closed. Hence $\ell>0$.
    
    Since $T$ is a rectifiable curve in $\Sph$ of length $\ell$,
    the standard arclength factorization
    (e.g., \cite[Proposition~2.5.9]{MR1835418})
    yields a continuous non-decreasing surjection
    $\sigma\colon[0,1]\to[0,\ell]$ and a Lipschitz map $\tau\colon[0,\ell]\to\Sph$ with $|\tau'|=1$ a.e.\ such that
    $T=\tau\circ\sigma$.

    Finally, the non-singleton fibers $\sigma^{-1}(\{u\})$ are pairwise disjoint intervals of positive length,
    and hence there are at most countably many such fibers.
\end{proof}

Now we prove the main generic properties for height functions.

\begin{proof}[Proof of \Cref{prop:generic-directions}]
    Let $T=\tau\circ\sigma$ be the factorization in \Cref{lem:tangent-factorization}.
    We first prove the key sign-changing property for $\tau$, and then translate it back to $T$.

    \emph{Step 1: Sign-changing zeros for the arclength reparametrization.}
    Choose an orthonormal basis $N_1(0),N_2(0)$ of $\tau(0)^\perp$ (i.e., the plane orthogonal to $\tau(0)$), and solve
    \begin{equation}\label{eq:normal-frame}
    N_i'=-\langle\tau',N_i\rangle\tau,
    \qquad i=1,2,
    \end{equation}
    on the interval $[0,\ell]$.
    This linear system has bounded measurable coefficients, so its solutions are Lipschitz. The equation yields $\langle\tau,N_i\rangle'=0$ and hence $\langle\tau,N_i\rangle=0$ on $[0,\ell]$.
    Then we also have $\langle N_i,N_j\rangle'=0$ for $i,j\in\{1,2\}$, so $N_1,N_2$ remain an orthonormal basis of $\tau^\perp$ at every $u\in[0,\ell]$.
    
    Consider the Lipschitz map
    \begin{equation}\label{eq:incidence-map}
        F(u,\theta)=\cos\theta\,N_1(u)+\sin\theta\,N_2(u),
        \qquad (u,\theta)\in[0,\ell]\times[0,2\pi].
    \end{equation}
    Since $F(u,\cdot)$ parametrizes the great circle perpendicular to $\tau(u)$ bijectively on $[0,2\pi)$, if we consider the zero set (except at the endpoints)
    \begin{equation}
        \widetilde Z_v:=\{u\in(0,\ell) : \langle\tau(u),v\rangle=0 \},
    \end{equation}
    then every $u\in\widetilde Z_v$ corresponds to a unique $\theta\in[0,2\pi)$ such that $F(u,\theta)=v$.
    
    We now remove the exceptional directions $v$ for the sign-changing property of $\langle \tau,v \rangle$.
    Let $A_0\subset[0,\ell]$ be a null set outside which $\tau,N_1,N_2$ are differentiable, \eqref{eq:normal-frame} holds, and $|\tau'|=1$.
    The image $F(A_0\times[0,2\pi])$ has area zero because $A_0\times[0,2\pi]$ has two-dimensional measure zero and $F$ is Lipschitz.
    We have $F_u=-\langle\tau',F\rangle\tau$ and $F_\theta=-\sin\theta\,N_1+\cos\theta\,N_2$ for a.e.\ $u$ and every $\theta$, so the Jacobian is given by
    \[
        JF:=|F_u\times F_\theta|=|\langle\tau',F\rangle|.
    \]
    On the set
    \begin{equation}
        X_0:=\{(u,\theta)\in(0,\ell)\times[0,2\pi):u\notin A_0,\
                       \langle\tau'(u),F(u,\theta)\rangle=0\},
    \end{equation}
    the Jacobian $JF$ vanishes, so applying the area formula \cite[Theorem~3.2.3]{Federer} to $F|_{X_0}$ shows that $F(X_0)$ also has area zero.
    Define the null set
    \[
    E':=F(A_0\times[0,2\pi])\cup F(X_0).
    \]
    For $v\in\Sph\setminus E'$ and $u_0\in\widetilde Z_v$, let $\theta_0\in[0,2\pi)$ be the unique number such that $F(u_0,\theta_0)=v$. 
    Then $u_0\notin A_0$ and $(u_0,\theta_0)\notin X_0$, so $\langle\tau'(u_0),v\rangle\ne0$.
    Therefore, around each zero $u_0\in \widetilde Z_v$, we have
    \[
        \langle\tau(u),v\rangle
        =
        \langle\tau'(u_0),v\rangle(u-u_0)
        +o(|u-u_0|) \qquad \text{with $\langle\tau'(u_0),v\rangle\ne0$},
    \]
    so $\langle\tau(\cdot),v\rangle$ has opposite signs on the two sides of $u_0$. In particular, every point of $\widetilde Z_v$ is isolated.
    (Note carefully that, at this stage, $\widetilde Z_v$ may still be infinite since accumulation at the endpoints has not yet been excluded.)

    \emph{Step 2: Translate back to the original parameter.}
    Let $\{u_j\}_{j\geq1}\subset[0,\ell]$ be the at most countable family of points such that $\sigma^{-1}(u_j)$ is not a singleton, and let $C_j\subset\Sph$ be the great circle perpendicular to $\tau(u_j)$.
    Also let $C_0\subset\Sph$ be the great circle perpendicular to $T(0)$.
    Now we define the null set
    \[
        E_0 := E' \cup \bigcup_{j\geq 0}C_j.
    \]
    
    Fix any $v\in\Sph\setminus E_0$. 
    Since $\tau(0)=\tau(\ell)=T(0)$ and $v\notin C_0$, neither $0$ nor $\ell$ is a zero of $\langle\tau,v\rangle$. Hence $\widetilde Z_v$ agrees with the full zero set of $\langle\tau,v\rangle$ on $[0,\ell]$.
    This zero set is compact, and each element is isolated by Step~1. Therefore $\widetilde Z_v$ is finite.
    In addition, for every $u_0\in\widetilde Z_v$ there is a unique $t_0\in(0,1)$ such that $\sigma(t_0)=u_0$, since otherwise $u_0=u_j$ for some $j\geq1$ and hence $v\in C_j$.
    Since
    \[
        h_v'(t)=|\gamma'(t)|\langle\tau(\sigma(t)),v\rangle
    \]
    and $|\gamma'|>0$, the map $\sigma$ induces a bijection between $Z_v$ and $\widetilde Z_v$. In particular, $\#Z_v=\#\widetilde Z_v<\infty$.
    Moreover, monotonicity and continuity of $\sigma$ give $\sigma(t)<u_0$ for $t<t_0$ and $\sigma(t)>u_0$ for $t>t_0$.
    Hence the sign-changing property of $\langle \tau(u),v \rangle$ at $u_0$ transfers to $\langle T(t),v\rangle$ at $t_0$, and then to $h_v'(t)$.
\end{proof}

We next show that, by additionally removing suitable directions, all branches at the image of a given critical point are of the ``same type'' and thus can be treated by the same local deformation.
For a regular $C^1$ closed curve $\gamma$, we define the set
\begin{equation}\label{eq:double-pairs}
    D:=\{(s,t)\in\T^2:s\ne t,\ \gamma(s)=\gamma(t)\}.
\end{equation}
Let $D_{\mathrm{iso}}$ be the set of isolated points of $D$, and let $A\subset\T$ consist of all coordinates of pairs in $D_{\mathrm{iso}}$.

\begin{lemma}\label{lem:fibers}
    Let $\gamma$ be a regular $C^1$ closed curve.
    For every $p\in\R^3$, the fiber $\gamma^{-1}(\{p\})$ is finite. Moreover, $D_{\mathrm{iso}}$ and $A$ are at most countable, and the set
    \begin{equation}\label{eq:isolated-exceptional}
        E_1:=\bigcup_{s\in A}\{v\in\Sph:\langle T(s),v\rangle=0\}
    \end{equation}
    has area zero.
\end{lemma}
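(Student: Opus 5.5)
The proof splits into three parts: finiteness of fibers, countability of $D_{\mathrm{iso}}$ and $A$, and the null-set claim for $E_1$.

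\emph{Finiteness of fibers.} I would use the fact that a regular $C^1$ curve is locally injective. Fix $s\in\T$. Since $\gamma'(s)\neq0$ and $\gamma'$ is continuous, there is $\delta>0$ with $\langle\gamma'(t),\gamma'(s)\rangle>0$ for $|t-s|<\delta$. Then $t\mapsto\langle\gamma(t),\gamma'(s)\rangle$ is strictly increasing on $(s-\delta,s+\delta)$, so $\gamma$ is injective there. Now suppose $\gamma^{-1}(\{p\})$ were infinite. By compactness of $\T$ it would have an accumulation point $s$, and $\gamma(s)=p$ by continuity. This contradicts injectivity of $\gamma$ on a neighborhood of $s$. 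Hence every fiber is finite.

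\emph{Countability of $D_{\mathrm{iso}}$ and $A$.} This is a general fact: the set of isolated points of any subset of a second countable space is countable. Each isolated point $x\in D$ admits a basic open set $U_x$ from a fixed countable base of $\T^2$ with $U_x\cap D=\{x\}$. The map $x\mapsto U_x$ is injective, so $D_{\mathrm{iso}}$ is countable. The set $A$ is contained in the union of the two coordinate projections of $D_{\mathrm{iso}}$, so it is countable as well.

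\emph{$E_1$ has area zero.} For each $s$, the set $\{v:\langle T(s),v\rangle=0\}$ is a great circle in $\Sph$, since $T(s)$ is a unit vector, and a great circle has area zero. A countable union of null sets is null, so $E_1$ has area zero. None of these steps is a real obstacle. The only point needing a little care is the local injectivity argument, where the strict monotonicity of the projection onto $\gamma'(s)$ is what makes it work with only $C^1$ regularity.
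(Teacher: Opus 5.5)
Your proposal is correct and follows essentially the same route as the paper: local injectivity from the strict monotonicity of the projection onto the tangent direction gives finite fibers, second countability of $\T^2$ gives countability of $D_{\mathrm{iso}}$ and hence of $A$, and $E_1$ is a countable union of great circles. You also spell out the accumulation-point argument and the injectivity of $x\mapsto U_x$, which the paper leaves implicit.
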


\begin{proof}
    Since $\gamma$ is regular and $C^1$, it is locally injective: near each parameter, projection onto the tangent direction is strictly monotone. 
    Thus each fiber $\gamma^{-1}(\{p\})$ is discrete and compact, and hence finite.
    The space $D$ is second countable. Assigning to each isolated point a basis element containing no other point of $D$ shows that $D_{\mathrm{iso}}$ is at most countable. Hence the same holds for $A$. Finally, $E_1$ is thus a countable union of great circles, so has measure zero.
\end{proof}

\begin{proposition}\label{prop:compatible}
    Suppose that $\TC[\gamma]<\infty$, and let $v\notin E_0\cup E_1$.
    If $s\in Z_v$, and if $t\in\T$ satisfies $\gamma(t)=\gamma(s)$, then $\R\gamma'(t)=\R\gamma'(s)$; in particular, $t\in Z_v$.
    Moreover, the points $s,t$ are either both strict local maxima or both strict local minima of $h_v$.
\end{proposition}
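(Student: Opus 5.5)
We prove the statement by case analysis on whether the pair $(s,t)$ is an isolated point of $D$. Fix $v\notin E_0\cup E_1$, $s\in Z_v$, and $t\ne s$ with $\gamma(t)=\gamma(s)$ (the case $t=s$ is trivial). Then $(s,t)\in D$.

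\emph{Case 1: $(s,t)\in D_{\mathrm{iso}}$.} Then $t\in A$, so if $\langle T(t),v\rangle=0$ we would have $v\in E_1$. Hence $t\notin Z_v$ would be forced. We must therefore exclude this case by showing that $t\in Z_v$ is forced anyway. Equivalently, we show that $\langle T(t),v\rangle\neq 0$ leads to a contradiction; this is a sub-case of the argument below. In fact $s\in A$ as well, and $s\in Z_v$ means $\langle T(s),v\rangle=0$, so $v\in E_1$, a contradiction. So Case 1 cannot occur at all.

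\emph{Case 2: $(s,t)$ is an accumulation point of $D$.} Pick $(s_k,t_k)\in D$ with $(s_k,t_k)\to(s,t)$ and $(s_k,t_k)\ne(s,t)$. By local injectivity of $\gamma$ near $s$ and near $t$ (\Cref{lem:fibers}), we have $s_k\ne s$ and $t_k\ne t$ for large $k$. Passing to a subsequence, we may also assume $s_k\ne s_{k'}$ and $t_k\ne t_{k'}$ for $k\ne k'$.

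Since $\gamma(s_k)=\gamma(t_k)$, the difference quotients satisfy
\[
\frac{\gamma(s_k)-\gamma(s)}{s_k-s}\cdot\frac{s_k-s}{t_k-t}=\frac{\gamma(t_k)-\gamma(t)}{t_k-t}.
\]
The left factor tends to $\gamma'(s)\neq0$ and the right-hand side tends to $\gamma'(t)\ne0$. Taking norms, the ratios $(s_k-s)/(t_k-t)$ stay bounded away from $0$ and $\infty$. Passing to a limit $c\ne0$ of these ratios gives $\gamma'(t)=c\,\gamma'(s)$, so $\R\gamma'(t)=\R\gamma'(s)$. Consequently $h_v'(t)=c\,h_v'(s)=0$, i.e. $t\in Z_v$.

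\emph{Same type of extremum.} By \Cref{prop:generic-directions}, $h_v'$ changes sign at both $s$ and $t$, so each is a strict local maximum or a strict local minimum of $h_v$. We argue by contradiction: suppose $s$ is a strict local maximum and $t$ a strict local minimum. Then there are neighbourhoods $U\ni s$ and $V\ni t$ with $h_v<h_v(s)$ on $U\setminus\{s\}$ and $h_v>h_v(t)=h_v(s)$ on $V\setminus\{t\}$. For large $k$ we have $s_k\in U\setminus\{s\}$ and $t_k\in V\setminus\{t\}$, and $\gamma(s_k)=\gamma(t_k)$ gives $h_v(s_k)=h_v(t_k)$. This yields
\[
h_v(s)>h_v(s_k)=h_v(t_k)>h_v(s),
\]
a contradiction. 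The same argument handles the reversed roles, so $s$ and $t$ are of the same type.

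\emph{Main obstacle.} The delicate point is Case 2's claim that $s_k\ne s$ and $t_k\ne t$, which is needed both to form the difference quotients and to apply strictness. If instead $s_k=s$ along a subsequence, then $\gamma(t_k)=\gamma(s)$ gives infinitely many distinct $t_k$ in the finite fiber $\gamma^{-1}(\{\gamma(s)\})$, or forces $t_k=t$, contradicting $(s_k,t_k)\ne(s,t)$. Either way this is impossible, so the claim holds, and the parallel argument gives $t_k\ne t$.
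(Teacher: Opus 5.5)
Your proof is correct, but it is organized differently from the paper's.

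\emph{The paper's route.} The paper argues by contradiction twice. First, if $\gamma'(s)$ and $\gamma'(t)$ were linearly independent, the inverse function theorem applied to $(u,w)\mapsto\pi_P(\gamma(u)-\gamma(w))$ would show $(s,t)\in D_{\mathrm{iso}}$. Second, if $s$ were a strict maximum and $t$ a strict minimum (or vice versa), the two height inequalities would show that no other double point lies in $U_s\times U_t$, again giving $(s,t)\in D_{\mathrm{iso}}$. In both cases $s\in A$ together with $\langle T(s),v\rangle=0$ contradicts $v\notin E_1$.

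\emph{Your route.} You use $v\notin E_1$ once, up front, to conclude that $(s,t)$ is an accumulation point of $D$. You then extract both conclusions from a single sequence of nearby double points $(s_k,t_k)$. The difference-quotient identity gives $\gamma'(t)=c\,\gamma'(s)$ with $c\neq 0$, and the same sequence makes the max/min contradiction direct.

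\emph{Comparison.} Both routes rest on the same key observation: the pair $(s,t)$ cannot be isolated in $D$. Your same-type step is just the paper's argument run directly rather than by contradiction.
\begin{itemize}
\item What your version buys is that the tangent step avoids the inverse function theorem. It needs only first-order expansions of $\gamma$ at $s$ and $t$, and the pointwise fact $\gamma(u)\neq\gamma(t)$ for $u\neq t$ near $t$ already suffices to guarantee $s_k\neq s$ and $t_k\neq t$. It does not need $C^1$ regularity near the two points.
\item The paper's version makes the geometric mechanism transparent: a transversal crossing, or a crossing of a maximum branch with a minimum branch, is automatically an isolated double point.
\end{itemize}

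\emph{Presentation.} The first half of your Case~1 (the sentences about $t\in A$ and ``$t\notin Z_v$ would be forced'') is confused and should be deleted. The final two sentences, which use $s\in A$ and $\langle T(s),v\rangle=0$, are all that is needed to rule out that case. The extra subsequence with pairwise distinct $s_k$ and $t_k$ is never used and can be dropped.
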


\begin{proof}
    We may assume $s\ne t$.
    We first show that $t\in Z_v$.
    Since $s\in Z_v$, it suffices to show that $\R\gamma'(t)=\R\gamma'(s)$.
    Suppose on the contrary that $\gamma'(s)$ and $\gamma'(t)$ are linearly independent. Let $P$ be the plane spanned by them, and $\pi_P:\R^3\to P$ be the orthogonal projection. 
    Then the map $(u,w)\mapsto\pi_P(\gamma(u)-\gamma(w))$ from $\T^2$ to $P\simeq\R^2$ has invertible derivative at $(s,t)\in\T^2$, since its columns are $\gamma'(s)$ and $-\gamma'(t)$. The inverse function theorem, together with $\gamma(s)=\gamma(t)$, implies that $(s,t)$ is the only zero in a neighborhood. In particular, $(s,t)\in D_{\mathrm{iso}}$. Then $s\in A$.
    On the other hand, we have $\langle T(s),v\rangle=0$ by $s\in Z_v$, contradicting $v\notin E_1$. 
    This proves $\R\gamma'(t)=\R\gamma'(s)$, and in particular $t\in Z_v$.

    By \Cref{prop:generic-directions}, both the critical points $s,t$ are strict extrema of $h_v$. If $s$ were a local maximum and $t$ a local minimum, then there would exist disjoint neighborhoods $U_s$ and $U_t$ of $s$ and $t$, respectively, such that the height $c:=h_v(s)=h_v(t)$ satisfies $h_v(u)<c$ for all $u\in U_s\setminus\{s\}$ while $h_v(w)>c$ for all $w\in U_t\setminus\{t\}$.
    The equality $\gamma(u)=\gamma(w)$ could then hold only at $(u,w)=(s,t)$ in $U_s\times U_t$, again ensuring $(s,t)\in D_{\mathrm{iso}}$. This contradicts $v\notin E_1$. The remaining case can be treated identically, so the proof is complete.
\end{proof}

We now introduce the notion of a good direction, which will be used throughout the rest of this section.

\begin{definition}
    We call $v\in \Sph\setminus (E_0\cup E_1)$ a \emph{good direction}, where $E_0$ (resp.\ $E_1$) is as in \Cref{prop:generic-directions} (resp.\ \Cref{lem:fibers}).
\end{definition}

For every good direction, if $s\in\T$ is a critical point of $h_v$, then any preimage of $\gamma(s)$ is also critical, with the same tangent line and the same type of extremum.

\subsection{Simultaneous deformation of the branches}

Now we prove a key deformation result for the critical branches in every good direction.

We first record the stability and localization statements needed to deform the branches.
The following $C^1$-stability result is already discussed by Blatt \cite{Blatt}.

\begin{lemma}\label{lem:embedding-stability}
    Every regular $C^1$ embedding $\eta\colon\T\to\R^3$ has a $C^1$ neighborhood consisting of regular embeddings ambient isotopic to $\eta$. Consequently, under the assumptions of \Cref{thm:milnor}, the approximating curves may be assumed $C^\infty$ smooth.
\end{lemma}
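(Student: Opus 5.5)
The plan is to prove the stability statement directly, via a tubular-neighborhood graph argument, and then deduce the smoothing consequence by mollification.

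\emph{Step 1: a normal frame.} Let $\eta$ be a regular $C^1$ embedding. First I approximate $\eta$ in $C^1$ by a smooth embedding $\tilde\eta$. Convolution in the parameter gives a smooth curve, and I will check that it is an embedding (see Step 3). Next I choose a smooth normal frame along $\tilde\eta$. I also fix $r>0$ so that the normal-disk map
\[
\Phi(t,x)=\tilde\eta(t)+x_1N_1(t)+x_2N_2(t)
\]
is a diffeomorphism from $\T\times\{|x|<r\}$ onto a tubular neighborhood $U$. This uses the inverse function theorem along the curve together with injectivity of $\tilde\eta$ and compactness.

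\emph{Step 2: curves $C^1$-close to $\tilde\eta$ are graphs in the tube.} Take $\xi$ regular and $C^1$-close to $\tilde\eta$. Then $\xi(\T)\subset U$, and I write $\Phi^{-1}\circ\xi=(\phi(t),x(t))$. Because $\xi'$ is close to $\tilde\eta'$, we get $\phi'>0$ and $\phi$ is $C^1$-close to the identity, so $\phi$ is a diffeomorphism of $\T$ of degree one. Hence $\xi\circ\phi^{-1}$ is the graph $t\mapsto\Phi(t,y(t))$ of a $C^1$ section $y$ with small norm. Such a graph is automatically injective, so $\xi$ is an embedding.

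\emph{Step 3: the isotopy.} The ambient isotopy is obtained by straight-line interpolation $y_\lambda:=\lambda y$ inside the tube. Each $\Phi(t,y_\lambda(t))$ is an embedding, and the family can be realized as the flow of a compactly supported vector field. Concretely, I take the fiberwise map $(t,x)\mapsto(t,x+\lambda\chi(|x|)y(t))$ with a cutoff $\chi$ and $y$ small, which is a homeomorphism on each fiber. This yields an ambient isotopy (topological, which suffices for tame knots) from $\tilde\eta$ to $\xi$. Since $\eta$ itself lies in this neighborhood, everything near $\eta$ is isotopic to $\tilde\eta$, hence to $\eta$. The neighborhood of $\eta$ is then a smaller $C^1$ ball around $\eta$ that is contained in the ball around $\tilde\eta$.

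\emph{Step 4: the consequence.} For the approximating curves $\gamma_j$ of \Cref{thm:milnor}, each $\gamma_j$ is replaced by a mollification $\tilde\gamma_j$ with $\|\tilde\gamma_j-\gamma_j\|_{C^1}<1/j$ that lies in the stability neighborhood of $\gamma_j$. Then $\tilde\gamma_j$ is a smooth embedding representing $K$, and $\tilde\gamma_j\to\gamma$ in $C^1$.

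\emph{Main obstacle.} The delicate point is making Step 1 rigorous when $\eta$ is only $C^1$. The tube must be built around a smooth nearby curve, and one needs a uniform radius $r$. The radius comes from a global injectivity estimate: there is $c>0$ with $|\eta(s)-\eta(t)|\ge c\,|s-t|_{\T}$. This holds locally by regularity and the $C^1$ modulus of continuity, and globally by compactness and injectivity. The same estimate passes to $C^1$-close curves, and it guarantees that the normal-disk map is injective.
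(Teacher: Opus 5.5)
Your route differs from the paper's. The paper obtains the stability assertion by citing Blatt's Corollary~1.5 and then runs exactly the mollification of your Step~4. A self-contained tube argument is a reasonable substitute, and Steps~2--3 are correct for a \emph{fixed} smooth core $\tilde\eta$.

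\textbf{The gap.} The transfer back to $\eta$ (``since $\eta$ itself lies in this neighborhood'') is not justified, and the justification in your last paragraph is incorrect.
\begin{enumerate}
\item The bi-Lipschitz bound only keeps normal disks at far-apart parameters from meeting. For nearby parameters, normal disks of $\tilde\eta$ of radius $r$ intersect once $r>1/\max|\kappa_{\tilde\eta}|$, and $\Phi$ even fails to be a local diffeomorphism at focal points.
\item Unless $\eta'$ is Lipschitz, $\max|\kappa_{\tilde\eta}|\to\infty$ as $\tilde\eta\to\eta$ in $C^1$. So there is no uniform radius. The modulus of continuity of $D\Phi^{-1}$, which governs the threshold for $\phi'>0$ in Step~2, degenerates as well.
\item Hence the admissible neighborhood of $\tilde\eta$ shrinks as $\tilde\eta\to\eta$, and nothing you wrote shows that $\eta$ stays inside it.
\end{enumerate}
For a plain $C^1$ ball this can genuinely fail. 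Step~2 needs at least $\|\xi-\tilde\eta\|_{C^0}<r\le 1/\max|\kappa_{\tilde\eta}|$. If, e.g., the tangent angle of $\eta$ behaves like $|t|^{1/3}$ somewhere, then mollification at scale $h$ gives $\max|\kappa_{\tilde\eta}|\sim h^{-2/3}$, while $\|\eta'-\tilde\eta'\|_\infty\sim h^{1/3}\gg h^{2/3}$. A two-scale version can be made to work: use separate $C^0$ and $C^1$ thresholds, together with the fact that for mollification $\|\eta-\tilde\eta\|_{C^0}\cdot\max|\kappa_{\tilde\eta}|\to 0$. But that is not the argument you give.

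\textbf{The simplest repair.} Drop both the intermediate curve and the insistence on \emph{normal} disks.
\begin{enumerate}
\item Take smooth $N_1,N_2$ that are merely transverse to $\eta'$, for instance an orthonormal frame of $\hat T^\perp$ for a smooth $\hat T$ uniformly close to $\eta'/|\eta'|$.
\item Set $\Phi(t,x)=\eta(t)+x_1N_1(t)+x_2N_2(t)$. This map is $C^1$, and $D\Phi(t,0)$, with columns $\eta'(t),N_1(t),N_2(t)$, is invertible.
\item $\Phi$ is injective on $\T\times\{0\}$. By the inverse function theorem and compactness, it is therefore a $C^1$ diffeomorphism from $\T\times B_r^2(0)$ onto a neighborhood of $\eta(\T)$.
\item Your Steps~2 and~3 use only that $\Phi$ and $\Phi^{-1}$ are $C^1$. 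So they apply verbatim with $\eta$ itself as the core. This gives the first assertion with an isotopy through $C^1$ diffeomorphisms, and no uniformity issue arises.
\end{enumerate}
With this change your proof becomes a self-contained replacement for the citation of Blatt, and your Step~4 is the same as the paper's mollifier argument.
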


\begin{proof}
    The first assertion follows directly from \cite[Corollary~1.5]{Blatt}, and then the second follows by a standard mollifier argument.
\end{proof}

The following elementary local graph representation is a standard consequence of the inverse function theorem, compactness, and $C^1$ convergence, so the detailed proof is safely omitted. We just record the precise statement needed below.
Hereafter we write $B_r^d(p)\subset\R^d$ for the open ball of radius $r$ centered at $p$.

\begin{lemma}\label{lem:cylinders}
    Let $\gamma:\T\to\R^3$ be a regular $C^1$ closed curve and $p\in\gamma(\T)$.
    Suppose that there is a one-dimensional linear subspace $L\subset\R^3$ such that $\R\gamma'(s)=L$ for all $s\in\gamma^{-1}(p)$.
    Then the following properties hold:
    \begin{itemize}
        \item \textup{(Local graph representation)} Choose orthogonal coordinates $(x,w)\in\R\times\R^2$ centered at $p$, with the $x$-axis along $L$. For all sufficiently small $r>0$, the preimage $\gamma^{-1}(C_r)$ of the closed cylinder
        \begin{equation}\label{eq:cylinder}
            C_r:=[-r,r]\times \overline{B_r^2(0)},
        \end{equation}
        consists of pairwise disjoint closed intervals $I_1,\dots,I_m\subset \T$, where $m:=\#\gamma^{-1}(\{p\})$.
        For every $i$, the branch $\gamma|_{I_i}$ can be written as the graph of a function $f_i\in C^1([-r,r];\R^2)$, namely
        \begin{equation}
            \gamma(I_i)=\{ (x,f_i(x)) : x\in[-r,r] \},
        \end{equation}
        such that
        \begin{equation}\label{eq:limiting-graphs}
            f_i(0)=f_i'(0)=0,
            \qquad \max_{1\leq i\leq m}\max_{|x|\le r}|f_i(x)|<r/8.
        \end{equation}
        \item \textup{(Graphical convergence)} If $\eta_j\to\gamma$ in $C^1$, then, for all sufficiently large $j$, the preimage $\eta_j^{-1}(C_r)$ also consists of exactly $m$ pairwise disjoint compact parameter intervals. On the corresponding intervals, the branches of $\eta_j$ can be written as graphs of $f_{j,i}\in C^1([-r,r];\R^2)$ such that
        \begin{equation}\label{eq:graph-convergence}
            \max_{1\leq i\leq m}\max_{|x|\le r}|f_{j,i}(x)|<r/4,
        \end{equation}
        and, after relabeling $i$ if necessary, $f_{j,i}\to f_i$ in $C^1$ as $j\to\infty$.
    \end{itemize}
    In particular, for finitely many distinct points $p$, the above cylinders $C_r$ may be chosen pairwise disjoint.
\end{lemma}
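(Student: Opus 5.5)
We prove \Cref{lem:cylinders} directly from the inverse function theorem, compactness of $\T$, and uniform $C^1$ convergence.

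\emph{Local graph representation.} Write $\gamma^{-1}(\{p\})=\{s_1,\dots,s_m\}$; this set is finite by \Cref{lem:fibers}. Fix coordinates $(x,w)$ as in the statement, so that $x\circ\gamma$ has derivative $\langle\gamma'(s_i),e_1\rangle\neq0$ at each $s_i$, because $\R\gamma'(s_i)=L$.

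By continuity of $\gamma'$, choose disjoint closed intervals $J_i\ni s_i$ with the following properties on each $J_i$:
\begin{itemize}
\item $|\langle\gamma',e_1\rangle|\ge c>0$;
\item the angle between $\gamma'$ and $L$ is at most a small $\epsilon$, so that $|\partial_x w|\le\tan\epsilon$.
\end{itemize}
Compactness of $\T\setminus\bigcup_i\operatorname{int}J_i$, together with the fact that $\gamma$ does not hit $p$ there, gives $\delta>0$ with $|\gamma-p|\ge\delta$ on that set.

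Now take $r<\delta/2$ and small enough that $x\circ\gamma$ maps a subinterval $I_i\subset J_i$ homeomorphically onto $[-r,r]$. This is possible since $x\circ\gamma$ is strictly monotone on $J_i$, vanishes at $s_i$, and has speed at least $c$. On $I_i$ we set $f_i:=w\circ\gamma\circ(x\circ\gamma)^{-1}$. It is $C^1$ by the inverse function theorem and satisfies $f_i(0)=0$ and $f_i'(0)=0$.

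Since $f_i'$ is continuous with $f_i'(0)=0$, we have $|f_i(x)|\le|x|\sup_{[-r,r]}|f_i'|=o(r)$. Hence $|f_i|<r/8$ for $r$ small, so each branch stays in $C_r$.

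Conversely, points of $J_i\setminus I_i$ have $|x|>r$ by monotonicity, and points outside $\bigcup J_i$ are at distance at least $\delta>\sqrt2 r$ from $p$. Neither lies in $C_r$, so $\gamma^{-1}(C_r)=\bigcup_i I_i$.

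\emph{Graphical convergence.} Fix this $r$ and let $\eta_j\to\gamma$ in $C^1$. For large $j$, the following hold:
\begin{itemize}
\item $|\langle\eta_j',e_1\rangle|\ge c/2$ on each $J_i$;
\item $|\eta_j-p|>\sqrt2 r$ off $\bigcup J_i$;
\item $x\circ\eta_j$ still covers $[-r,r]$ strictly monotonically on a subinterval $I_{j,i}\subset J_i$. This holds because $|x\circ\gamma|$ exceeds $r$ by a fixed margin at the endpoints of $J_i$, once $J_i$ is chosen slightly larger than $I_i$.
\end{itemize}
This gives exactly $m$ intervals and graphs $f_{j,i}$. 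Uniform convergence of $\eta_j$, together with the $C^1$ convergence of the inverses $(x\circ\eta_j|_{I_{j,i}})^{-1}$ (whose derivatives are bounded below), yields $f_{j,i}\to f_i$ in $C^1$. In particular $|f_{j,i}|<r/4$ for large $j$.

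We must also check that the points of $\eta_j^{-1}(C_r)$ lying in $J_i$ are only those of $I_{j,i}$. This holds because $|x\circ\eta_j|>r$ elsewhere on $J_i$.

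Finally, for finitely many distinct points $p$, shrinking $r$ below half their mutual distances makes the cylinders pairwise disjoint.

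The main obstacle is this bookkeeping step: choosing $J_i$, $\delta$ and $r$ in the right order, so that the exclusion of other parameter pieces from $C_r$ is uniform in $j$. Everything else is routine once that order is fixed.
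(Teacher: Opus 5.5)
Your argument is correct and follows the route the paper has in mind. The paper omits this proof, calling it a standard consequence of the inverse function theorem, compactness and $C^1$ convergence, and your ordering of the choices ($J_i$, then $\delta$, then $r$, then $j$) supplies those details properly.

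One small numerical fix is needed in the last step. Since $C_r\subset\overline{B_{\sqrt2 r}(p)}$, cylinders at distinct centers are guaranteed disjoint only if $r<\min|p-p'|/(2\sqrt2)$, not merely $r<\min|p-p'|/2$. For example, two equally oriented cylinders whose centers are offset diagonally can intersect under the weaker bound.
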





Now we turn to the construction of deformations.
The following local construction preserves pairwise disjointness of the branches while replacing their height functions by strictly concave ones near the center.

\begin{lemma}\label{lem:simultaneous-deformation}
    Let $\eta$ be a smooth embedded closed curve such that the preimage of the cylinder $C_r$
    consists of pairwise disjoint closed intervals $I_1,\dots,I_m\subset\T$, and the corresponding branches $\eta|_{I_i}$ are given by smooth graphs
    \begin{equation}
        \eta(I_i)
        =\{(x,f_i(x)):x\in[-r,r]\},
        \quad  f_i=(y_i,z_i),
        \quad \max_{1\leq i\leq m}\max_{|x|\le r}|f_i(x)|<r/4.
    \end{equation}
    Assume that for every $i$,
    \begin{align}\label{eq:z-signs}
        \begin{split}
        &z_i(x)<0 \quad (r/3\le|x|\le r),\\
        &z_i'(x)>0 \quad (-r\le x\le-r/3), \qquad z_i'(x)<0 \quad (r/3\le x\le r).
        \end{split}
    \end{align}
    Then there exists a smooth ambient isotopy $\Phi:\R^3\times[0,1]\to\R^3$, with $\Phi_\lambda:=\Phi(\cdot,\lambda)$, and a compact set $S$ in the interior of $C_r$ such that $\Phi_0=\mathrm{Id}$, and $\Phi_\lambda(p)=p$ for every $p\in\R^3\setminus S$ and $\lambda\in[0,1]$.
    Moreover, for every $i$,
    \begin{equation}
        \Phi_1(\eta(I_i))
        =\{(x,\widehat f_i(x)):x\in[-r,r]\},
        \qquad \widehat f_i=(\widehat y_i,\widehat z_i),
    \end{equation}
    where there is a unique $x_i\in(-r/3,r/3)$ satisfying
    \begin{equation}\label{eq:deformed-height-signs}
        \widehat z_i'(x)>0\quad(-r\le x<x_i),\qquad
        \widehat z_i'(x_i)=0,\qquad
        \widehat z_i'(x)<0\quad(x_i<x\le r).
    \end{equation}
    
    Similarly, if all signs in \eqref{eq:z-signs} are reversed, the same conclusion holds with all signs in \eqref{eq:deformed-height-signs} reversed.
\end{lemma}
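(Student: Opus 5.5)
We would build $\Phi$ as a vertical shear that acts only in the $z$-coordinate and is cut off near the boundary of $C_r$. Fix a smooth function $g\colon\R\to\R$ with $g\ge0$, $g\equiv0$ on $\{|x|\ge r/2\}$, and $g$ large on $[-r/3,r/3]$; its precise shape is chosen below. Fix also a cutoff $\chi\in C_c^\infty(B^2_{r/2}(0))$ with $\chi\equiv1$ on $\overline{B^2_{r/4}(0)}$. Define
\[
\Phi_\lambda(x,y,z):=\bigl(x,\,y,\,z+\lambda\,\chi(y,z)\,g(x)\bigr),
\qquad \lambda\in[0,1].
\]
The argument then proceeds in three steps.

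\emph{Step 1: the shear is compactly supported.} The map $\Phi_\lambda-\mathrm{Id}$ is supported in $S:=[-r/2,r/2]\times\operatorname{supp}\chi$, which is a compact subset of the interior of $C_r$.

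\emph{Step 2: the shear is an ambient isotopy.} Each $\Phi_\lambda$ preserves the lines $\{x,y\ \text{fixed}\}$, so it suffices to show it is strictly monotone in $z$ along them. We have $\partial_z(\Phi_\lambda)_z=1+\lambda g(x)\partial_z\chi$. A large $g$ can destroy this monotonicity where $\partial_z\chi\neq0$.
\begin{itemize}
\item To avoid this, we make $\chi$ depend essentially on $y$ only in the relevant range: take $\chi(y,z)=\chi_1(y)\chi_2(z)$ with $\chi_2\equiv1$ on a long interval $[-R,R]$.
\item For this to fit inside $C_r$ we need the whole sheared region to stay there. So we instead control the size of $g$: the branches satisfy $|f_i|<r/4$, and what we need is only the sign pattern of the derivative, not large amplitude.
\end{itemize}
Concretely, choose $g$ with $g'\ge0$ on $[-r/2,0]$, $g'\le0$ on $[0,r/2]$, $\sup g\le r/8$, and $g''\le -M$ on $[-r/3,r/3]$. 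Here $M$ is large compared with $\sup_i\|z_i''\|_{L^\infty(-r/3,r/3)}$, which is finite because the curve is smooth and there are finitely many branches. Choose $\chi_2\equiv1$ on $[-3r/8,3r/8]$, so every branch point lies where $\chi_2\equiv1$ both before and after the shift, since $|z|<r/4$ and $g\le r/8$. Then $|\partial_z\chi|\,g$ is nonzero only in the region $|z|>3r/8$. The gradient bound $|\chi_2'|\lesssim 1/(r/8)$ times $g\le r/8$ keeps $1+\lambda g\partial_z\chi$ bounded below by $1-O(1)$. We fix the constants, e.g. $\sup g\le r/100$, so that this Jacobian stays positive. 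This gives a family of diffeomorphisms that equal the identity outside $S$ and depend smoothly on $\lambda$.

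\emph{Step 3: the new branches have the required height profile.} On the branches $\chi\equiv1$ holds, so $\Phi_1(\eta(I_i))$ is the graph of $\widehat y_i=y_i$ and $\widehat z_i=z_i+g$. On $r/3\le|x|\le r$ the derivative $g'$ has the same sign as $z_i'$, so the hypothesis \eqref{eq:z-signs} gives $\widehat z_i'>0$ on $[-r,-r/3]$ and $\widehat z_i'<0$ on $[r/3,r]$. On $[-r/3,r/3]$ we have $\widehat z_i''\le \|z_i''\|_\infty-M<0$, so $\widehat z_i'$ is strictly decreasing there. Together with the signs at $\pm r/3$, this gives a unique zero $x_i\in(-r/3,r/3)$ and the sign pattern \eqref{eq:deformed-height-signs}.

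The main obstacle is reconciling the two demands on $g$. A strongly concave $g$ on $[-r/3,r/3]$ requires $g''\le-M$ with $M$ possibly huge, while the cutoff estimate in Step 2 requires $g$ small.

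Our first attempt is to take $g(x)=\varepsilon\,\psi(x)$ with large frequency or curvature: for example $g(x)=-\tfrac{M}{2}x^2+c$ on $[-r/3,r/3]$, smoothly flattened outside. The catch is that $\tfrac{M}{2}(r/3)^2$ is not small when $M$ is large. To fix this, we would shrink the concavity window to $[-\delta,\delta]$ around a point. This breaks the clean split, since $\widehat z_i'$ must also keep its sign on $[\delta,r/3]$, where the hypothesis says nothing about $z_i'$. Failing that, we would drop the smallness of $g$. Since the $z$-shift is vertical and $g$ depends only on $x$, we can use a cutoff in $y$ only and a cutoff in $z$ far away. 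Growing in $z$ is limited by the cylinder height $r$, so the unresolved point is whether the $r/4$ margin suffices. If it does not, we would first apply a preliminary vertical compression isotopy $z\mapsto\epsilon z$ near the branches, which reduces $\|z_i''\|$ and hence the required $M$, and only then apply the shear.
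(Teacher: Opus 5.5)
Your construction does not prove the lemma as written. The two requirements you place on $g$ are incompatible, and the proposal stops at exactly this point.

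If $g\ge0$ and $g''\le -M$ on $[-r/3,r/3]$, Taylor expansion at $0$ gives $0\le g(r/3)+g(-r/3)\le 2g(0)-Mr^2/9$, so $g(0)\ge Mr^2/18$. On the other hand, $\Phi_1$ is the identity outside $S$, hence maps $S\subset C_r$ onto itself. Since $\chi=1$ at the branch points, this forces $z_i(0)+g(0)\le r$, i.e.\ $g(0)<5r/4$, and your Jacobian estimate even asks for $g\le r/100$. So the shear works only if $\max_i\|z_i''\|_{L^\infty(-r/3,r/3)}\lesssim 1/r$.

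The hypotheses give no such bound. They control only $|f_i|<r/4$ and the signs of $z_i,z_i'$ on $r/3\le|x|\le r$. In the application, $\eta=\gamma_j$ converges to $\gamma$ only in $C^1$ while $r$ is fixed before $j$, so the branches may oscillate over $[-r/3,r/3]$ with small amplitude but huge curvature. Shrinking the concavity window or enlarging $g$ fails, as you observe yourself.

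The missing idea is the one you mention only in your last sentence, and carrying it out is the actual content of the proof. A uniform compression $z\mapsto\epsilon z$ is not admissible: the isotopy must be the identity near $|x|=r$, where the branches leave $C_r$. The compression must therefore depend on $x$:
\begin{itemize}
\item Take $\alpha$ smooth and even with values in $[\delta,1]$, $\alpha\equiv\delta$ on $[-r/3,r/3]$, $\alpha\equiv1$ for $|x|\ge 2r/3$, and $\alpha'\ge0$ for $x>0$.
\item Realize $w\mapsto\alpha(x)w$ on the branches as the flow of the compactly supported field $\bigl(0,\frac{\alpha(x)-1}{\alpha_\lambda(x)}\chi(w)\,w\bigr)$, where $\alpha_\lambda:=1-\lambda+\lambda\alpha$.
\item On $[-r/3,r/3]$ this turns $z_i''$ into $\delta z_i''$. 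A concave profile of fixed amplitude then dominates it once $\delta$ is small compared with $\varepsilon/M$.
\item On the transition region the product rule creates the term $\alpha'z_i$. This is $\le0$ for $x>0$ and $\ge0$ for $x<0$ only because of the hypothesis $z_i<0$ on $r/3\le|x|\le r$.
\end{itemize}
Your proposal never uses that hypothesis, which is a sign that this mechanism is missing.

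The paper does the compression and the bending in one step, by contracting toward the parabola $Q(x)=(0,\varepsilon(r^2-x^2))$ with $\varepsilon r^2<r/4$: it sets $f_i^\lambda=Q+\alpha_\lambda(f_i-Q)$. This has the following properties:
\begin{itemize}
\item The branches stay in $B^2_{r/4}(0)$, since each $f_i^\lambda$ is a convex combination of points of that ball.
\item The branches stay disjoint, since $f_i^\lambda-f_j^\lambda=\alpha_\lambda(f_i-f_j)$.
\item On $[-r/3,r/3]$ one gets $\widehat z_i''=-2\varepsilon(1-\delta)+\delta z_i''<-\varepsilon$.
\item On $[r/3,r]$ one gets $\widehat z_i'=(1-\alpha)q'+\alpha z_i'+\alpha'(z_i-q)<0$.
\end{itemize}
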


\begin{proof}
    Choose $\varepsilon>0$ such that $\varepsilon r^2<r/4$, and set
    \begin{equation}
        q(x):=\varepsilon(r^2-x^2),\qquad Q(x):=(0,q(x)).
    \end{equation}
    Let $M:=\max_i\|z_i''\|_{L^\infty(-r/3,r/3)}$ and choose $\delta$ such that $0<\delta<\min\{1,\frac{\varepsilon}{M+2\varepsilon}\}$.
    Take a smooth even function $\alpha\colon\R\to[\delta,1]$ such that
    \begin{equation}\label{eq:contraction-cutoff}
        \alpha=\delta\ \text{on }[-r/3,r/3],\qquad
        \alpha=1\ \text{for }|x|\ge2r/3,\qquad
        \alpha'(x)\ge0\ \text{for }x>0.
    \end{equation}
    For $0\le\lambda\le1$, define
    \begin{equation}\label{eq:branch-isotopy}
        \alpha_\lambda:=1-\lambda+\lambda\alpha,
        \qquad f_i^\lambda:=Q+\alpha_\lambda(f_i-Q).
    \end{equation}
    These graphs satisfy $f_i^\lambda(x) \in B_{r/4}^2(0)$ for all $x\in[-r,r]$ since they are convex combinations of $Q$ and $f_i$ such that $Q(x),f_i(x)\in B_{r/4}^2(0)$ for $x\in[-r,r]$. Their differences satisfy
    \begin{equation}\label{eq:branch-differences}
        f_i^\lambda(x)-f_j^\lambda(x)
        =\alpha_\lambda(x)(f_i(x)-f_j(x))\ne0
        \qquad(i\ne j),
    \end{equation}
    because $\alpha_\lambda\ge\delta>0$ and $\eta$ is embedded.

    We now realize \eqref{eq:branch-isotopy} by an ambient isotopy. Choose a cutoff $\chi\in C_c^\infty(B_{r/2}^2;[0,1])$ with $\chi=1$ on $\overline{B_{r/3}^2}$, and consider the smooth time-dependent vector field
    \begin{equation}\label{eq:ambient-vector-field}
        V_\lambda(x,w)
        =\left(0,\frac{\alpha(x)-1}{\alpha_\lambda(x)}
                 \chi(w)(w-Q(x))\right).
    \end{equation}
    The support of $V$ is contained in the interior of $C_r$. The associated flow $\Phi_\lambda$ exists for $0\le\lambda\le1$, consists of smooth diffeomorphisms, and is the identity outside the interior of  $C_r$. Along the graphs $f_i^\lambda$, the cutoff equals one and
    \begin{equation}
        \partial_\lambda f_i^\lambda
        =(\alpha-1)(f_i-Q)
        =\frac{\alpha-1}{\alpha_\lambda}(f_i^\lambda-Q).
    \end{equation}
    Uniqueness for this ODE gives
    \[
        \Phi_\lambda(x,f_i(x))=(x,f_i^\lambda(x)).
    \]
    Thus, setting $\widehat f_i:=f_i^1$, the prescribed deformation is realized by the ambient isotopy.

    It remains to verify \eqref{eq:deformed-height-signs}. We have
    $\widehat z_i=q+\alpha(z_i-q)$ and hence
    \begin{equation}\label{eq:deformed-height-derivative}
        \widehat z_i'=(1-\alpha)q'+\alpha z_i'+\alpha'(z_i-q).
    \end{equation}
    On the interval $[r/3,r]$ we have $q'<0$, $z_i'<0$, $z_i-q\leq z_i<0$, and $\alpha'\ge0$, so $\widehat z_i'<0$.
    Similarly, 
    $\widehat z_i'>0$ on $[-r,-r/3]$. 
    On $[-r/3,r/3]$, since $\alpha=\delta$,
    \begin{equation}
        \widehat z_i''=-2\varepsilon(1-\delta)+\delta z_i''
        \le-2\varepsilon+\delta(M+2\varepsilon)<-\varepsilon.
    \end{equation}
    Thus $\widehat z_i'$ is strictly decreasing on $[-r/3,r/3]$, positive at $-r/3$, and negative at $r/3$. Hence $\widehat z_i'$ has a unique zero $x_i\in(-r/3,r/3)$, and \eqref{eq:deformed-height-signs} holds. If all signs are reversed, apply the same argument after reversing the $z$-coordinate.
\end{proof}


We now apply the local deformation at all critical image points of a fixed good direction.

\begin{proposition}\label{prop:directional-recovery}
    Suppose that smooth regular embeddings $\gamma_j:\T\to\R^3$ converge in $C^1$ as $j\to\infty$ to a regular curve $\gamma:\T\to\R^3$ with finite total curvature. 
    For each good direction $v\in\Sph$ and all sufficiently large $j$, there is a smooth embedding $\beta_{j,v}:\T\to\R^3$ ambient isotopic to $\gamma_j$ such that
    \begin{equation}\label{eq:exact-recovery}
        \mu[\beta_{j,v},v]=\mu[\gamma,v].
    \end{equation}
\end{proposition}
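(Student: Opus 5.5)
Fix a good direction $v$. By \Cref{prop:generic-directions}, $Z_v$ is finite and each of its points is a strict extremum of $h_v$. Let $p_1,\dots,p_N$ be the distinct image points $\gamma(Z_v)$. By \Cref{prop:compatible}, every preimage of each $p_k$ lies in $Z_v$, all of them share the tangent line $L_k:=\R\gamma'(s)$ for $s\in\gamma^{-1}(p_k)$, and all of them are extrema of the same type. Since $v\perp L_k$, we may choose coordinates $(x,w)=(x,y,z)$ centered at $p_k$ with the $x$-axis along $L_k$ and the $z$-axis along $v$ (or along $-v$ at minima). Then \Cref{lem:cylinders} gives pairwise disjoint cylinders $C_r^{(k)}$, each meeting $\gamma$ in $m_k=\#\gamma^{-1}(p_k)$ graphical branches $f_i=(y_i,z_i)$ with $f_i(0)=f_i'(0)=0$.

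The next step is to arrange the sign conditions \eqref{eq:z-signs}. Since $z_i'$ is the $v$-component of the graph derivative, the sign of $z_i'(x)$ agrees with that of $h_v'$ along the branch. The height $h_v$ has a strict local maximum at the center and no other critical point nearby, so for $r$ small we have $z_i'>0$ on $[-r,0)$ and $z_i'<0$ on $(0,r]$, hence $z_i<0$ for $x\ne0$. On the compact set $r/3\le|x|\le r$ these are strict inequalities with a uniform margin, so they persist under the $C^1$ convergence $f_{j,i}\to f_i$ from \Cref{lem:cylinders}. Hence \eqref{eq:z-signs} holds for $\gamma_j$ for all large $j$, together with the bound $|f_{j,i}|<r/4$. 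Outside $\bigcup_k\mathrm{int}\,C_r^{(k)}$ the curve $\gamma$ has $h_v'\ne0$, so by compactness $|\langle\gamma',v\rangle|\ge c>0$ there, and by $C^1$ convergence $\gamma_j$ has no critical points of $h_v$ outside the cylinders for large $j$. A small margin is needed here. Parameters in $\gamma^{-1}(C_r)$ that are not in the interior still satisfy $|x|\ge r/3$ on the branch, or lie on the lateral boundary where $|w|\ge r/4$ is impossible for branch points. So I will take the complement set to be $\T\setminus\bigcup_{k,i}\{$branch parameters with $|x|<r/3\}$, which is compact and avoids $Z_v$.

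Now apply \Cref{lem:simultaneous-deformation} to $\eta=\gamma_j$ in each cylinder, reversing signs at minima. The resulting isotopies have disjoint supports $S^{(k)}\subset\mathrm{int}\,C_r^{(k)}$, so their composition is an ambient isotopy. Set $\beta_{j,v}:=\Phi_1\circ\gamma_j$, which is smooth, embedded and ambient isotopic to $\gamma_j$. Outside the cylinders $\beta_{j,v}=\gamma_j$, so there are no critical points of the height there. Inside each cylinder, each of the $m_k$ branches has exactly one critical point, by \eqref{eq:deformed-height-signs}, and it is a strict maximum (resp. minimum) when $p_k$ is a maximum (resp. minimum). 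Hence the local maxima of $\langle\beta_{j,v},v\rangle$ are in bijection with the maxima of $h_v$, counted with parameters, giving \eqref{eq:exact-recovery}.

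The main obstacle is the bookkeeping in the second step. One must ensure that every parameter near a critical value is captured by exactly one branch, and that the deformation creates no spurious extrema at the branch ends. This is handled by the graphical convergence of \Cref{lem:cylinders} and the uniform sign margins on $r/3\le|x|\le r$. A related point is that the deformation's support stays inside the cylinder, so no other part of $\gamma_j$ that passes through $C_r$ could be moved. This cannot happen, since \Cref{lem:cylinders} says $\gamma_j^{-1}(C_r)$ consists exactly of the $m_k$ branches.
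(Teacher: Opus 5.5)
Your proposal is correct and is essentially the paper's proof. It uses the same cylinders from \Cref{lem:cylinders}, the same uniform sign margins on $r/3\le|x|\le r$ passed to $\gamma_j$ by graphical convergence, the same compactness-with-margin argument confining critical points of $\gamma_j$ to the cylinders, and the same composition of disjointly supported isotopies from \Cref{lem:simultaneous-deformation}; the one differences are that you cut at $|x|<r/3$ where the paper uses $|x|<r/4$, and you leave implicit the step the paper states, that $\gamma_j$ maps these parameters into $C_r$ for large $j$. The only slip is your claim that $z_i'$ has the same sign as $h_v'$ along a branch: this holds only up to the orientation of the branch relative to the $x$-axis, but the sign pattern you need still follows from the strict extremum and the absence of other nearby critical points.
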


\begin{proof}
    Fix a good direction $v\in\Sph$, and let $p_1,\ldots,p_N\in\R^3$ be the distinct images of the finitely many critical points of $h_v$. For each $\ell$, write
    \[
        \gamma^{-1}(\{p_\ell\})=\{t_{\ell,1},\ldots,t_{\ell,m_\ell}\}.
    \]
    By \Cref{prop:compatible}, all $t_{\ell,i}$ are critical points of the same type and their tangent lines agree; denote this common line by $L_\ell$. For each $\ell$ we use orthogonal coordinates $(x,y,z)$ centered at $p_\ell$, with the $x$-axis along $L_\ell$ and the $z$-axis in direction $v$. By \Cref{lem:cylinders}, we may choose $r_\ell>0$ so that the corresponding closed cylinders $C_{r_\ell}$ are pairwise disjoint and
    \[
    \gamma^{-1}(C_{r_\ell})=I_{\ell,1}\cup\cdots\cup I_{\ell,m_\ell},
    \]
    where the $I_{\ell,i}$ are pairwise disjoint closed intervals and the corresponding branches $\gamma|_{I_{\ell,i}}$ are written as graphs $f_{\ell,i}=(y_{\ell,i},z_{\ell,i})$ on $[-r_\ell,r_\ell]$.

    Suppose first that the points $t_{\ell,i}$ are local maxima of $h_v$. By the sign-changing property of $h_v'$ at $t_{\ell,i}$, taking smaller $r_\ell$ if necessary, we have
    \begin{equation}\label{eq:limit-branch-signs}
        z_{\ell,i}(0)=0,\quad
        z_{\ell,i}'(x)>0\quad(-r_\ell\le x<0),\quad
        z_{\ell,i}'(x)<0\quad(0<x\le r_\ell).
    \end{equation}
    In particular, $z_{\ell,i}(x)<0$ for $x\ne0$.
    If the $t_{\ell,i}$ are local minima, all these signs are reversed.

    By the graphical convergence in \Cref{lem:cylinders}, for all sufficiently large $j$,
    \[
        \gamma_j^{-1}(C_{r_\ell})=I_{j,\ell,1}\cup\cdots\cup I_{j,\ell,m_\ell},
    \]
    and the smooth graphs $f_{j,\ell,i}=(y_{j,\ell,i},z_{j,\ell,i})$ corresponding to the branches $\gamma_j|_{I_{j,\ell,i}}$ converge to $f_{\ell,i}$ in $C^1([-r_\ell,r_\ell])$ as $j\to\infty$. Since there are only finitely many $\ell$ and $i$, we may take $j$ large enough that this holds simultaneously for all branches. In the case that the critical points are local maxima, property \eqref{eq:limit-branch-signs} and compactness of $\{r_\ell/3\le|x|\le r_\ell\}$ imply
    \begin{align}
        &z_{j,\ell,i}(x)<0\ (r_\ell/3\le|x|\le r_\ell),\\
        &z_{j,\ell,i}'(x)>0\ (-r_\ell\le x\le-r_\ell/3),\qquad z_{j,\ell,i}'(x)<0\ (r_\ell/3\le x\le r_\ell),
    \end{align}
    while in the local-minimum case all signs are reversed. Thus  the hypotheses of \Cref{lem:simultaneous-deformation} hold in every cylinder $C_{r_\ell}$.

    It remains to exclude critical points of the approximating heights outside these cylinders. For every $\ell,i$, let $V_{\ell,i}\subset I_{\ell,i}$ be the subinterval corresponding to $|x|<r_\ell/4$. The union of the intervals $V_{\ell,i}$ contains $Z_v$. Hence $|h_v'|$ has a positive minimum on the compact set
    \[
        \T\setminus\bigcup_{\ell,i}V_{\ell,i}.
    \]
    Since $\gamma_j\to\gamma$ in $C^1$, we have $\langle\gamma_j',v\rangle\to h_v'$ uniformly, so for all sufficiently large $j$ the height of $\gamma_j$ has no critical point on this compact set. On the other hand, $C^1$ convergence and the choice $|x|<r_\ell/4$ imply that $\gamma_j(V_{\ell,i})\subset C_{r_\ell}$ for all sufficiently large $j$. Therefore every critical point of the height of $\gamma_j$ lies in one of $C_{r_\ell}$.

    Fix such a large $j$. In each cylinder $C_{r_\ell}$, we apply \Cref{lem:simultaneous-deformation} at $p_\ell$ and write $\Phi^{\ell}$ for the resulting ambient isotopy. Since the cylinders are pairwise disjoint and each isotopy is supported in the interior of its cylinder, if we set
    \[
        \beta_{j,v}:=\Phi^{1}_1\circ\cdots\circ\Phi^{N}_1\circ\gamma_j,
    \]
    then $\beta_{j,v}$ is a smooth embedding ambient isotopic to $\gamma_j$. Outside the cylinders its height has no critical points, while inside each $C_{r_\ell}$ every branch has exactly one critical point, of the same type as $t_{\ell,i}$. Finally, if we write $I^+$ for the set of all indices $1\le\ell\le N$ such that $\gamma^{-1}(\{p_\ell\})$ consists of local maxima, then
    \[
        \mu[\beta_{j,v},v]
        =\sum_{\ell \in I^+}m_\ell
        =\mu[\gamma,v],
    \]
    which proves \eqref{eq:exact-recovery}.
\end{proof}

\subsection{Completion of the proof of Milnor's inequality}

We now complete the proof of the pointwise crookedness estimate and the extension of Milnor's inequality.

\begin{proof}[Proof of \Cref{thm:milnor_pointwise}]
    By \Cref{lem:embedding-stability}, the approximating embeddings $\gamma_j$ may be taken smooth. 
    For each good direction $v$, \Cref{prop:directional-recovery} and \eqref{eq:intro-bridge} give
    \begin{equation}
        \bri(K)\le\mu[\beta_{j,v},v]=\mu[\gamma,v].
    \end{equation}
    Good directions have full area measure, proving \eqref{eq:intro-directional-bound}. 
\end{proof}

\begin{proof}[Proof of \Cref{thm:milnor}]
    The conclusion follows immediately from \Cref{thm:milnor_pointwise} and Milnor's integral formula \eqref{eq:intro-milnor-formula}.
\end{proof}


We also record an immediate consequence for the bending energy.

\begin{corollary}\label{cor:bending-bound}
    Under the hypotheses of \Cref{thm:milnor}, suppose that the arclength parametrization of $\gamma$ belongs to $H^2$. Then
    \begin{equation}\label{eq:normalized-bending-bound}
        \L[\gamma]\B[\gamma]\ge(2\pi \bri(K))^2.
    \end{equation}
    In the equality case, $|\kappa|=2\pi \bri(K)/\L[\gamma]$ holds almost everywhere.
    In particular, the arclength parametrization is of class $W^
    {2,\infty}$, or equivalently $C^{1,1}$.
\end{corollary}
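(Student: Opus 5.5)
The plan is to combine \Cref{thm:milnor} with the Cauchy--Schwarz inequality, and then to analyse the equality case. First we reduce to arclength. The quantities $\L[\gamma]\B[\gamma]$ and $\TC[\gamma]$ are invariant under reparametrization and scaling, so we may work with the arclength parametrization $\tilde\gamma:[0,L]\to\R^3$, where $L:=\L[\gamma]$, and assume $\tilde\gamma\in H^2$. Then $\tilde\gamma\in W^{2,1}$ on a bounded interval. By the remark after \eqref{eq:tc-definition},
\[
\TC[\gamma]=\int_0^L|\tilde\gamma''|\,ds,
\]
and this is finite. Hence all hypotheses of \Cref{thm:milnor} hold, including $\TC[\gamma]<\infty$, and we obtain $\TC[\gamma]\ge 2\pi\bri(K)$.

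Next we apply Cauchy--Schwarz:
\[
(2\pi\bri(K))^2\le\Bigl(\int_0^L|\kappa|\,ds\Bigr)^2\le L\int_0^L|\kappa|^2\,ds=\L[\gamma]\B[\gamma],
\]
which is \eqref{eq:normalized-bending-bound}.

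Now suppose equality holds. Then both inequalities in the chain are equalities. Equality in Cauchy--Schwarz forces $|\kappa|$ to equal a constant $c$ almost everywhere. Equality in the first step gives $cL=\TC[\gamma]=2\pi\bri(K)$, so $c=2\pi\bri(K)/L$. In particular $\tilde\gamma''\in L^\infty$, so $\tilde\gamma\in W^{2,\infty}$.

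Finally, $W^{2,\infty}$ on an interval is the same as $\tilde\gamma'$ being Lipschitz, that is, $C^{1,1}$. This is the standard identification of $W^{1,\infty}$ with Lipschitz functions in one dimension, and the periodic closing condition causes no difficulty.

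No step here is a real obstacle. The only points needing care are that the $H^2$ assumption guarantees $\TC[\gamma]<\infty$, so \Cref{thm:milnor} applies, and that the equality case is read off from the whole chain of inequalities rather than from Cauchy--Schwarz alone.
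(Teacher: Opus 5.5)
Your proposal is correct and follows essentially the same route as the paper: combine \Cref{thm:milnor} with Cauchy--Schwarz, get $|\kappa|$ constant a.e.\ from equality in Cauchy--Schwarz, and fix its value from equality in the Milnor step. Your check that the $H^2$ assumption gives $\TC[\gamma]<\infty$ is harmless but not needed, since finiteness of $\TC[\gamma]$ is already among the hypotheses of \Cref{thm:milnor}.
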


\begin{proof}
    By \Cref{thm:milnor} and the Cauchy--Schwarz inequality,
    \begin{equation}
        (2\pi \bri(K))^2
        \le\left(\int_\gamma|\kappa|\,ds\right)^2
        \le \L[\gamma]\int_\gamma|\kappa|^2\,ds.
    \end{equation}
    Equality forces equality in both inequalities: the equality in Cauchy--Schwarz makes $|\kappa|$ constant a.e., and the remaining condition yields the desired value.
    The regularity follows since $|\gamma_{ss}|$ is constant a.e.
\end{proof}

Finally, we complete the proof of the circular elastic knot conjecture.

\begin{proof}[Proof of \Cref{thm:main}]
    By \Cref{thm:milnor}, every unit-speed $H^2$ curve in the $C^1$-closure of $\CK$ satisfies
    \[
        \TC[\gamma]\ge 2\pi\bri(K).
    \]
    Thus Reiter--von der Mosel's hypothesis \cite[Assumption~1.1]{ReiterVdM2026} is satisfied. Since $\bri(K)=\bra(K)=a$, their result \cite[Theorem~1.2]{ReiterVdM2026} applies and shows that every elastic knot for $K$ is, up to a Euclidean isometry and reparametrization, the $a$-fold covered circle. Since all curves in our definition are unit-speed, the remaining reparametrization of the circle can be absorbed into a Euclidean isometry. This proves the claim.
\end{proof}

\begin{acknowledgements}
    This work is supported by JSPS KAKENHI Grant Numbers JP23H00085 and JP24K00532.
    The author would like to thank Yasuhiko Asao and Olivier Pierre-Louis for helpful discussions during the very early stages of this work.
\end{acknowledgements}

\begin{AI}
    ChatGPT was used interactively in the preparation and revision of this manuscript. The author independently reviewed and revised all mathematical statements, proofs, and exposition and takes full responsibility for the final manuscript.
\end{AI}

\bibliography{references}

@techreport{Blatt,
  author      = {Blatt, Simon},
  title       = {Note on continuously differentiable isotopies},
  type        = {Report},
  number      = {34},
  institution = {Institut f\"ur Mathematik, RWTH Aachen University},
  year        = {2009},
  url         = {https://www.instmath.rwth-aachen.de/Preprints/blatt20090825.pdf},
  note        = {Available at \url{https://www.instmath.rwth-aachen.de/Preprints/blatt20090825.pdf}},
}

@book{Federer,
  author    = {Federer, Herbert},
  title     = {Geometric measure theory},
  series    = {Classics in Mathematics},
  publisher = {Springer-Verlag},
  address   = {Berlin},
  year      = {1996},
  doi       = {10.1007/978-3-642-62010-2},
  note      = {Reprint of the 1969 edition. DOI \href{https://doi.org/10.1007/978-3-642-62010-2}{\nolinkurl{10.1007/978-3-642-62010-2}}},
}

@article{GRM,
  author  = {Gerlach, Henryk and Reiter, Philipp and von der Mosel, Heiko},
  title   = {The elastic trefoil is the doubly covered circle},
  journal = {Arch. Ration. Mech. Anal.},
  volume  = {225},
  year    = {2017},
  number  = {1},
  pages   = {89--139},
  doi     = {10.1007/s00205-017-1100-9},
  note    = {DOI \href{https://doi.org/10.1007/s00205-017-1100-9}{\nolinkurl{10.1007/s00205-017-1100-9}}},
}

@article{Milnor,
  author  = {Milnor, John W.},
  title   = {On the total curvature of knots},
  journal = {Ann. of Math. (2)},
  volume  = {52},
  year    = {1950},
  number  = {2},
  pages   = {248--257},
  doi     = {10.2307/1969467},
  note    = {DOI \href{https://doi.org/10.2307/1969467}{\nolinkurl{10.2307/1969467}}},
}

@incollection{Sullivan,
  author    = {Sullivan, John M.},
  title     = {Curves of finite total curvature},
  booktitle = {Discrete differential geometry},
  series    = {Oberwolfach Seminars},
  volume    = {38},
  publisher = {Birkh\"auser},
  address   = {Basel},
  year      = {2008},
  pages     = {137--161},
  doi       = {10.1007/978-3-7643-8621-4_7},
  note      = {DOI \href{https://doi.org/10.1007/978-3-7643-8621-4_7}{\nolinkurl{10.1007/978-3-7643-8621-4_7}}},
}

@phdthesis{Wacker,
  author      = {Wacker, Elisabeth},
  title       = {Total curvature of curves in the {$C^1$}-closure of knot classes with finite, isolated self intersections},
  type        = {Ph.D. thesis},
  school      = {RWTH Aachen University},
  institution = {RWTH Aachen University},
  year        = {2021},
  url         = {https://publications.rwth-aachen.de/record/836835/files/836835.pdf},
  note        = {Available at \url{https://publications.rwth-aachen.de/record/836835/files/836835.pdf}},
}

@article{GPL,
  author  = {Gallotti, Riccardo and Pierre-Louis, Olivier},
  title   = {Stiff knots},
  journal = {Phys. Rev. E (3)},
  volume  = {75},
  year    = {2007},
  number  = {3},
  pages   = {Paper No. 031801},
  doi     = {10.1103/PhysRevE.75.031801},
  note    = {DOI \href{https://doi.org/10.1103/PhysRevE.75.031801}{\nolinkurl{10.1103/PhysRevE.75.031801}}},
}

@article{vdM,
  author  = {von der Mosel, Heiko},
  title   = {Minimizing the elastic energy of knots},
  journal = {Asymptot. Anal.},
  volume  = {18},
  year    = {1998},
  number  = {1--2},
  pages   = {49--65},
  doi     = {10.3233/ASY-1998-314},
  note    = {DOI \href{https://doi.org/10.3233/ASY-1998-314}{\nolinkurl{10.3233/ASY-1998-314}}},
}

@book{MR1835418,
  author    = {Burago, Dmitri and Burago, Yuri and Ivanov, Sergei},
  title     = {A course in metric geometry},
  series    = {Graduate Studies in Mathematics},
  volume    = {33},
  publisher = {American Mathematical Society},
  address   = {Providence, RI},
  year      = {2001},
  pages     = {xiv+415},
  doi       = {10.1090/gsm/033},
  note      = {DOI \href{https://doi.org/10.1090/gsm/033}{\nolinkurl{10.1090/gsm/033}}},
}

@article{GM99,
  author  = {Gonzalez, Oscar and Maddocks, John H.},
  title   = {Global curvature, thickness, and the ideal shapes of knots},
  journal = {Proc. Natl. Acad. Sci. USA},
  volume  = {96},
  year    = {1999},
  number  = {9},
  pages   = {4769--4773},
  doi     = {10.1073/pnas.96.9.4769},
  note    = {DOI \href{https://doi.org/10.1073/pnas.96.9.4769}{\nolinkurl{10.1073/pnas.96.9.4769}}},
}

@article{LS85,
  author  = {Langer, Joel and Singer, David A.},
  title   = {Curve straightening and a minimax argument for closed elastic curves},
  journal = {Topology},
  volume  = {24},
  year    = {1985},
  number  = {1},
  pages   = {75--88},
  doi     = {10.1016/0040-9383(85)90046-1},
  note    = {DOI \href{https://doi.org/10.1016/0040-9383(85)90046-1}{\nolinkurl{10.1016/0040-9383(85)90046-1}}},
}

@book{BZH,
  author    = {Burde, Gerhard and Zieschang, Heiner and Heusener, Michael},
  title     = {Knots},
  edition   = {3},
  series    = {De Gruyter Studies in Mathematics},
  volume    = {5},
  publisher = {De Gruyter},
  address   = {Berlin},
  year      = {2013},
  doi       = {10.1515/9783110270785},
  note      = {DOI \href{https://doi.org/10.1515/9783110270785}{\nolinkurl{10.1515/9783110270785}}},
}

@misc{ReiterVdM2026,
  author        = {Reiter, Philipp and von der Mosel, Heiko},
  title         = {Elastic {BB} knots are multifold circles},
  year          = {2026},
  eprint        = {2609.22575v1},
  archiveprefix = {arXiv},
  primaryclass  = {math.DG},
  url           = {https://arxiv.org/abs/2609.22575v1},
  note          = {Preprint, \href{https://arxiv.org/abs/2609.22575v1}{arXiv:2609.22575v1}},
}

@article{DER,
  author  = {Diao, Yuanan and Ernst, Claus and Reiter, Philipp},
  title   = {Knots with equal bridge index and braid index},
  journal = {J. Knot Theory Ramifications},
  volume  = {30},
  year    = {2021},
  number  = {11},
  pages   = {Paper No. 2150075},
  doi     = {10.1142/S0218216521500759},
  note    = {DOI \href{https://doi.org/10.1142/S0218216521500759}{\nolinkurl{10.1142/S0218216521500759}}},
}

@article{GRvM,
  author  = {Gilsbach, Alexandra and Reiter, Philipp and von der Mosel, Heiko},
  title   = {Symmetric elastic knots},
  journal = {Math. Ann.},
  volume  = {385},
  year    = {2023},
  number  = {1--2},
  pages   = {811--844},
  doi     = {10.1007/s00208-021-02346-9},
  note    = {DOI \href{https://doi.org/10.1007/s00208-021-02346-9}{\nolinkurl{10.1007/s00208-021-02346-9}}},
}

@article{MR4861585,
  author   = {Miura, Tatsuya and M\"uller, Marius and Rupp, Fabian},
  title    = {Optimal thresholds for preserving embeddedness of elastic flows},
  journal  = {Amer. J. Math.},
  volume   = {147},
  year     = {2025},
  number   = {1},
  pages    = {33--80},
  doi      = {10.1353/ajm.2025.a950273},
  note     = {DOI \href{https://doi.org/10.1353/ajm.2025.a950273}{\nolinkurl{10.1353/ajm.2025.a950273}}},
}

@article{BR,
  author  = {Bartels, S{\"o}ren and Reiter, Philipp},
  title   = {Stability of a simple scheme for the approximation of elastic knots and self-avoiding inextensible curves},
  journal = {Math. Comp.},
  volume  = {90},
  year    = {2021},
  number  = {330},
  pages   = {1499--1526},
  doi     = {10.1090/mcom/3633},
  note    = {DOI \href{https://doi.org/10.1090/mcom/3633}{\nolinkurl{10.1090/mcom/3633}}},
}

@incollection{MR5054189,
  author    = {Miura, Tatsuya},
  title     = {Elastic curves and self-intersections},
  booktitle = {2024 {MATRIX} annals. {P}art {I}},
  series    = {{MATRIX} Book Series},
  volume    = {7},
  publisher = {Springer},
  address   = {Cham},
  year      = {[2026] \copyright 2026},
  pages     = {337--374},
  doi       = {10.1007/978-3-032-16202-1_14},
  note      = {DOI \href{https://doi.org/10.1007/978-3-032-16202-1_14}{\nolinkurl{10.1007/978-3-032-16202-1_14}}},
}

@book{MR4439733,
  editor    = {Adams, Colin and Flapan, Erica and Henrich, Allison and Kauffman, Louis H. and Ludwig, Lewis D. and Nelson, Sam},
  title     = {Encyclopedia of knot theory},
  publisher = {CRC Press},
  address   = {Boca Raton, FL},
  year      = {[2021] \copyright 2021},
  pages     = {xi+941},
  doi       = {10.1201/9781138298217},
  note      = {DOI \href{https://doi.org/10.1201/9781138298217}{\nolinkurl{10.1201/9781138298217}}},
}

@article{MR4631455,
  author   = {Miura, Tatsuya},
  title    = {Li-{Y}au type inequality for curves in any codimension},
  journal  = {Calc. Var. Partial Differential Equations},
  volume   = {62},
  year     = {2023},
  number   = {8},
  pages    = {Paper No. 216, 28 pp.},
  doi      = {10.1007/s00526-023-02559-7},
  note     = {DOI \href{https://doi.org/10.1007/s00526-023-02559-7}{\nolinkurl{10.1007/s00526-023-02559-7}}},
}

@article{KrishnaMorton,
  author  = {Krishna, Siddhi and Morton, Hugh},
  title   = {Twist positivity, {$L$}-space knots, and concordance},
  journal = {Selecta Math. (N.S.)},
  volume  = {31},
  year    = {2025},
  number  = {1},
  pages   = {Paper No. 11, 27 pp.},
  doi     = {10.1007/s00029-024-00996-6},
  note    = {DOI \href{https://doi.org/10.1007/s00029-024-00996-6}{\nolinkurl{10.1007/s00029-024-00996-6}}},
}

@article{Himeno,
  author  = {Himeno, Keisuke},
  title   = {The bridge index and the braid index for twist positive knots},
  journal = {Proc. Amer. Math. Soc.},
  year    = {2026},
  doi     = {10.1090/proc/17938},
  note    = {To appear. DOI \href{https://doi.org/10.1090/proc/17938}{\nolinkurl{10.1090/proc/17938}}},
}

\end{document}